\newcommand*{\supp}{\mathrm{supp}}
\def\Exp{{\mathbb E}}
\def\Event{{\mathcal E}}
\def\Prob{{\mathbb P}}
\def\calF{{\mathcal{F}}}
\def\col{{\rm col}}
\def\row{{\rm row}}
\def\bfE{{\mathbf E}}
\def\bfC{{\mathbf C}}
\newtheorem*{theorem*}{Theorem}
\newtheorem*{question}{Question}
\newtheorem*{mmodel*}{Matrix model}
\def\online{\mathrm{ondisc}}
\def\disc{\mathrm{disc}}
\def\Cspread{C_{\text{\tiny{\ref{lemma:lb spread}}}}}
\def\excep{{\text{\tiny{\ref{lemma:exceptional row bound}}}}}
\newcommand*{\lb}{\text{\tiny{\ref{thm:lb}}}}
\newcommand*{\ub}{\text{\tiny{\ref{thm:ub}}}}
\newcommand*{\alg}{\text{\tiny{\ref{alg}}}}
\title {A threshold for online balancing of sparse i.i.d. vectors}
\author[D.\ J.\ Altschuler \and K.\ Tikhomirov]{
        Dylan J. Altschuler \and 
        Konstantin Tikhomirov 
        }
\address{Dylan J. Altschuler, Department of Mathematical Sciences, Carnegie Mellon University.}
\address{Konstantin Tikhomirov, Department of Mathematical Sciences, Carnegie Mellon University.}
\begin{document}

\begin{abstract}
Consider the task of \textit{online} vector balancing for stochastic arrivals $(X_i)_{i \in [T]}$, where the time horizon satisfies $T = \Theta(n)$, and the $X_i$ are i.i.d uniform $d$--sparse $n$--dimensional binary vectors,
with $2\leq d \le (\log\log n)^2/\log\log\log n$.
We show that for this range of parameters,
every online algorithm incurs discrepancy at least
$\Omega(\log \log n)$,
and there is an efficient algorithm which achieves a matching discrepancy bound of $O(\log\log n)$ w.h.p. This establishes an asymptotic gap, both existential and algorithmic, between the online and offline versions of the average--case Beck--Fiala problem. Strikingly, the optimal online discrepancy in the considered setting is order $\log \log n$, independent of $d$ and the norms of the vectors $(X_i)_i$. Our assumptions on $d$ are nearly optimal, as this independence ceases when $d=\omega((\log\log n)^2)$.
\end{abstract}

\maketitle
\section{Introduction}
Vector balancing is a fundamental task in combinatorics with numerous applications in algorithm design and optimization, ranging from rounding integer programs to experimental design \cite{beck-fiala,experimental}. A vector balancing problem is the task of assigning signs $\sigma_i \in \{-1,+1\}$ to vectors $X_i \in \RR^n$ with the goal of minimizing $\|\sum X_i\sigma_i\|_\infty$. Combinatorial discrepancy, a classical and widely studied quantity, corresponds to the case that all of the vectors $X_i$ are known beforehand:
\[
    \disc(X_1,\dots,X_T) = \min_{\sigma \in \{-1,1\}^T}\Big\|\sum_{i \in [T]} X_i \sigma_i\Big\|_\infty \,.
\]
Another classical setting, of particular interest in applications, is \textit{online} discrepancy minimization \cite{spencer-online}: the $X_i$ are revealed sequentially, and $\sigma_i$ must be chosen immediately and irrevocably upon revealing $X_i$. A natural motivation for this setting is that many of optimization problems connected to discrepancy minimization, such as bin packing and job scheduling \cite{bin}, have important online analogues. 

A rich body of works has developed estimates on the objective value for the online discrepancy minimization problem under various assumptions on the vectors $X_i$ (see, e.g., \cite{selfbalance,bansal-survey,bansal-online,optimal-online} and the references therein). Two core settings in this research direction are the so--called Koml\'os and Beck--Fiala settings, corresponding respectively to the cases that each $X_i$ is a unit vector or each $X_i$ is a $d$--sparse binary vector. Strikingly, in the Koml\'os setting\footnote{To the best of our knowledge, an analogous result does not seem to be recorded in the Beck--Fiala setting. We expect
that the lower bound construction in \cite{optimal-online}
can be adapted to the Beck--Fiala setting.}, it is known that there is an asymptotic gap between the discrepancy of worst--case inputs in the online versus offline problems \cite{optimal-online}.

A line of research spanning both the online and offline settings considers random and semi--random choices of the vectors $X_i$. Random discrepancy has garnered intense recent interest for a wealth of reasons, including interest in beyond--worst--case algorithmic aspects \cite{bansal-smooth1,bansal-smooth2}, connections with statistical physics \cite{APZ}, and perturbative analogues of long--standing conjectures in discrepancy theory \cite{bansal-meka}. Estimating the algorithmic and existential thresholds for discrepancy of random vectors, especially with regards to distinguishing the online and offline settings, remains an active and compelling area. We refer the reader to the ICM survey of Bansal \cite{bansal-survey} for a modern, comprehensive account of all the previously mentioned topics.

The present work investigates the online discrepancy of a sequence of uniformly random $d$--sparse binary vectors. The special case of $d = 2$, known as the ``online edge orientation'' or ``carpool'' problem, has a rich literature and much is known. We will be interested in the case of generic $d$, corresponding to the \textit{average--case online Beck--Fiala} problem. Letting $A \in \{0,1\}^{n \times T}$ be the matrix obtained by concatenating $T$ independent uniform $d$--sparse binary vectors, it is known in the \textit{offline} setting that $\disc(A) = \Theta(\sqrt{d})$ with high probability for any $d$, where the upper bound is achievable by an efficient algorithm \cite{bansal-meka}. 

We now present a simplified form of our main result, which shows a striking distinction between the online and offline guarantees. The full statements are briefly deferred. 
\begin{theorem}
    Let $T = n$, and let $X_1,\dots,X_T$ be independent and uniformly random $d$--sparse binary vectors. Then, for any $2 \le d \le n/2$, any online algorithm for assigning $\sigma \in \{-1,+1\}^T$ satisfies: w.h.p.,
    \[
        \big\|\sum_{i \in [T]} X_i\sigma_i\big\|_\infty \ge \max\Big(\frac{1}{8}\log\log n,\, \Omega\big(\sqrt{d}\big) \,\Big).
    \]
    On the other hand, for $d \le (\log\log n)^2 / \log\log\log n$, there is an explicit near-linear time online algorithm for assigning $\sigma$, satisfying: w.h.p.,
    \[
    \max_{t \in [T]}\,\big\|\sum_{i \in [t]} X_i\sigma_i\big\|_\infty \le 35\,\log\log n.
    \]
\end{theorem}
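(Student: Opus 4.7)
The statement splits into a lower bound and an algorithmic upper bound, which I would prove by separate arguments.

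\textbf{Lower bound.} The $\Omega(\sqrt d)$ term is the standard offline average-case Beck--Fiala lower bound, which applies to online algorithms automatically because any online sign sequence is a valid offline one. A second-moment computation on $\|\sum_i \sigma_i X_i\|_2^2$ for the random $d$-sparse matrix $A \in \{0,1\}^{n \times T}$ gives $\Omega(\sqrt{Td})$ in $\ell_2$ for every sign assignment, and pigeonholing over coordinates converts this to $\Omega(\sqrt d)$ in $\ell_\infty$.

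For the $\tfrac{1}{8}\log\log n$ term, my plan is a multi-scale induction on the discrepancy level. Let $L := \lfloor \log\log n / 8\rfloor$, set $d_t := \sum_{i \le t}\sigma_i X_i$, and inductively construct increasing checkpoint times $t_0 < t_1 < \cdots < t_L$ together with coordinate sets $S_\ell \subseteq \{j \in [n] : d_{t_\ell,j} \ge \ell\}$ satisfying $|S_\ell| \ge n_\ell$, where $n_\ell$ is a sequence decaying like $n_\ell \approx n^{\alpha^\ell}$ for some $\alpha \in (0,1)$. Since $\alpha^L \log n$ remains $\ge 1$ exactly when $L$ is of order $\log\log n$, the induction reaches level $L$ and forces a coordinate with imbalance $\ge L$ at time $t_L$ w.h.p. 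The crux is the inductive step: given $|S_\ell|$ coordinates at level $\ge \ell$, show that during the window $[t_\ell, t_{\ell+1}]$ at least $|S_\ell|^{\alpha}$ of them are pushed up to level $\ell+1$. I would argue this via an anti-concentration estimate on each coordinate's walk, conditional on the algorithm's filtration, combined with a tree or union argument to aggregate across $S_\ell$ while handling the inter-coordinate dependence. This inductive step is where the dependence of the algorithm's signs on the past must be handled carefully, and it is the main obstacle on the lower-bound side.

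\textbf{Upper bound.} For the algorithm I would propose a greedy potential-based rule. Define
\[
    \Phi(d) := \sum_{j \in [n]} \cosh(\lambda d_j),
\]
with $\lambda$ of order $\log n / \log\log n$ so that keeping $\Phi \le \mathrm{poly}(n)$ forces $\|d\|_\infty \le 35\log\log n$. When $X_t$ arrives, pick $\sigma_t \in \{-1,+1\}$ minimizing $\Phi(d_{t-1} + \sigma_t X_t)$. Because $X_t$ is $d$-sparse, the change in $\Phi$ involves only the $d$ coordinates in $\supp(X_t)$; expanding in $\lambda$ yields a deterministic greedy saving of order $\sinh(\lambda)\,\bigl|\sum_{j \in \supp(X_t)}\sinh(\lambda d_{t-1,j})\bigr|$ against a variance term of order $(\cosh\lambda - 1)\sum_{j \in \supp(X_t)}\cosh(\lambda d_{t-1,j})$. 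Averaging over the uniformly random $d$-sparse support converts this into a recursion for $\Exp\,\Phi(d_t)$; if the greedy saving dominates the variance blow-up throughout $T = \Theta(n)$ steps, $\Exp\,\Phi(d_t)$ stays polynomial in $n$, and a union bound over $t \in [T]$ yields the desired uniform-in-$t$ $\ell_\infty$ bound with high probability.

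The main obstacle on the upper-bound side is the improvement from the $O(\sqrt{\log n})$ discrepancy that a plain $\cosh$-potential analysis yields in analogous Koml\'os-type settings to the target $O(\log\log n)$. Achieving this requires sharper exploitation of the discrete binary structure: for instance, showing that typical supports $\supp(X_t)$ hit coordinates at varying sign levels, so that the greedy-saving term stays consistently comparable to $\Phi(d_{t-1})$ rather than suffering cancellations. The constraint $d \le (\log\log n)^2/\log\log\log n$ is presumably exactly the slack needed for this per-step saving to beat the variance term over the full run of the algorithm.
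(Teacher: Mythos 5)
Both halves of your proposal have a genuine gap at exactly the step you flag as "the main obstacle," and in each case the missing step is the actual content of the paper. On the lower bound, your inductive scheme (sets $S_\ell$ of coordinates at level $\ge \ell$, pushed up to level $\ell+1$ via "anti-concentration of each coordinate's walk conditional on the filtration") does not survive adversarial adaptivity: once you condition on the filtration there is no residual randomness in the sign, and whenever a column touches a single tracked coordinate and nothing else of interest, the algorithm can simply sign to pull that coordinate back down, so a one-sided set of high coordinates can shrink rather than advance. The paper's resolution is a pairing mechanism you are missing: it tracks an $(\ell,r)$--\emph{spread}, i.e.\ two large disjoint families of rows sitting at levels $\ell\le 0\le r$, and waits for columns whose support meets each family in exactly one row (and meets no other relevant column). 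For such a column, \emph{either} choice of sign pushes one of the two rows outward, so the gap $r-\ell$ grows by $1$ regardless of the algorithm; a second-moment count shows enough such columns appear in each window, and the spread sizes decay like $n/(\log n)^{3^q}$, which is what caps the argument at $q \asymp \log\log n$ levels (your $n^{\alpha^\ell}$ heuristic lands in the same place, but without the pairing idea the inductive step has no proof). Also, your $\Omega(\sqrt d)$ sketch is too quick: a second-moment bound on $\|A\sigma\|_2$ for each fixed $\sigma$ does not control the minimum over all $2^T$ sign vectors; the offline bound is a first-moment/counting argument, which is how the paper cites it.

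On the upper bound, you propose a $\cosh$-potential greedy with $\lambda \asymp \log n/\log\log n$ and acknowledge that the standard drift-versus-variance analysis only yields $O(\sqrt{\log n})$; the improvement to $O(\log\log n)$ is left as a hope about "sharper exploitation of the binary structure," which is precisely the part that needs a proof. The paper's algorithm and analysis are structurally different: it plays \emph{uniformly random} signs by default, intervening only when the current column contains exactly one "exceptional" row (one whose running sum has exceeded $C\log\log n$), in which case it signs to shrink that row. The constraint $d \le (\log\log n)^2/\log\log\log n$ is not about beating a variance term in a potential recursion; it is used so that, under purely random signs, a single row exceeds the threshold with probability at most $d^{-\Theta(1)}$ (Bernstein), whence the exceptional set has size $O(n d^{-5})$ w.h.p.\ after a delicate recursive "category" analysis of how exceptional rows beget new ones, and a further doubly-exponential recursion shows no row ever overshoots the threshold by more than $O(\log\log n)$. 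Whether your potential-based rule can be pushed below $\sqrt{\log n}$ at all in this regime is unclear, and nothing in your outline supplies the mechanism; as written, neither direction of the theorem is established.
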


Some remarks are in order. 

\begin{itemize}
    \item Our result shows that for $d \le (\log\log n)^2/\log\log\log n$, the online discrepancy of the considered vectors is $\Theta(\log \log n)$. Surprisingly, the discrepancy is independent of the sparsity $d$ and thus the norm of the column vectors. In addition to the intrinsic interest of this threshold, our result also directly contrasts with the Beck--Fiala and Koml\'os conjectures, which respectively assert that sparsity and column norms are the driving mechanisms of offline discrepancy. 
    \item Previous constructions for lower bounds on online discrepancy consider the Koml\'os setting and heavily rely on block structure in addition to the usage of signed entries. In contrast, we consider ``mean--field'' (delocalized) matrix ensembles with binary, i.e., non--negative, entries. Our lower bound construction is conceptually different and uncovers new obstacles to
    achieving the offline discrepancy bound.
    \item Specifically for $d = 2$, corresponding to the average-case of the well-studied ``online edge orientation'' problem, a similar result was obtained in \cite{ajtai-carpool}.
    \item Our random construction establishes an asymptotic gap between the average online and offline Beck--Fiala settings.
    \item The given upper bound actually considers \textit{prefix} discrepancy, which is significantly stronger than bounding online discrepancy.
    \item The main result also extends to other time horizons, which we now expand on:
\end{itemize}

\begin{remark}[Extensions to other time horizons: upper bound]
    The upper bound extends by trivial modifications to any $T = \Theta(n)$, up to replacing the constant 35 with some constant depending on $T/n$. As our algorithm has guarantees on \textit{prefix} online discrepancy, inputs of size $T < n$ can be handled by padding. Inputs of size $T \ge n$ are dealt with by concatenating instances of length at most $n$. Thus, no generality will be lost when we later define our algorithm specifically for $T = n$.
\end{remark}

\begin{remark}[Extensions to other time horizons: lower bound]
    The lower bound of $\Omega(\sqrt{d})$ follows from the corresponding lower bound for the offline setting in the regime $T = \Theta(n)$, obtainable by the first moment method \cite{hypergraph}. The lower bound of $\log\log n$, which is the threshold focused on in this article, extends to arbitrary time horizons $T \ge n$. The technical statement is given in \cref{thm:lb}.

    It is also a somewhat surprising feature of our proof  that our construction for the lower bound accumulates large discrepancy very fast for early times $t \ll n$, but the growth in discrepancy decays double--exponentially in $t$. As a result, our proof of \cref{thm:lb} actually yields a lower bound of $c_\alpha \log\log n$, for $T \ge n/e^{\log(n)^\alpha}$ for any $\alpha < 1$, where $c_\alpha$ is some constant depending only on $\alpha$. 
\end{remark}

We now formally introduce some notation and then provide a technical restatement of our result. 

\subsection{Notation}
We begin by defining {\it online discrepancy}
in the setting of i.i.d arrivals.
Let $\mu$ be a probability measure in $\RR^n$, let
$T$ be an integer parameter, and let $(X_i)_{i \in [T]}$ be i.i.d draws from $\mu$. 

\begin{definition}[Online algorithm]
    An online algorithm is a random variable
$\sigma=(\sigma_1,\dots,\sigma_T)$ taking values in $\{-1,+1\}^T$ such that, 
for some auxiliary random variable $\xi$ independent from $(X_i)_{i \in [T]}$ (representing the auxiliary randomness of the algorithm), 
$\sigma_t$ is measurable with respect to the algebra generated by $(\xi;X_1,\dots,X_t)$.
for all $1\leq t\leq T$.
The quantity
\begin{equation}\label{disc not minimized}
\max\limits_{1\leq t\leq T}\Big\|
\sum_{i \in [t]} \sigma_i X_i
\Big\|_\infty
\end{equation}
is the {\it online discrepancy}
for the sequence $X_1,\dots,X_T$ under the algorithm encoded by $\sigma$. The mentioned filtration will be denoted by:
\begin{equation}
    \calF_t = \sigma(\xi;X_1,\dots,X_t)\,.
\end{equation}
\end{definition}

Next, we introduce the ``mean--field'' model of sparse random matrices that we henceforth consider: 

\begin{definition}[Matrix model]\label{def:matrix model}
    Let $1\leq d\leq n$ be integer parameters. Let $\mu_d$ denote the uniform measure on $d$--sparse binary column vectors of length $n$. Denote by $A\sim \calM_{n,T,d}$ an $n\times T$ random matrix with i.i.d columns drawn from $\mu_d$. 
\end{definition}

(All subsequent proofs will be written in terms of $A \sim \calM_{n,T,d}$ rather than $(X_i)_i$ for notational convenience). Finally, we introduce some notation for standard operations on prefixes. 

\begin{definition}[Prefix operations]
Let $x=(x_1,\dots,x_n)$ and $y=(y_1,\dots,y_n)$
be two $n$--dimensional vectors, and let $1\leq k\leq n$.
We write 
$$
\langle x,y\rangle_{[k]} := \sum_{j=1}^k x_j y_j, \quad \langle x,y\rangle_{[0]}:=0
$$
and
\[
    \supp_{[k]}(x) := \{i \in [k]\,:\,x_i \neq 0\}, \quad \supp_{[0]}(x) := \emptyset.
\]

\end{definition}

\subsection{Technical statement of results}

The first result deals with a lower bound on the online discrepancy. As previously remarked, the lower bound of $\Omega(\sqrt{d})$ for $T = \Theta(n)$ was previously known. Our focus is the following new threshold: 

\begin{theorem}[Lower bound]\label{thm:lb}
Let $n$ be a large integer, let $2 \le d \le n/2$, and let $T\geq n$.
Then for {\it any} online assignment rule $\sigma=(\sigma_1,\dots,\sigma_T)$
for the i.i.d arrivals $X_1,X_2,\dots,X_T$ distributed according to $\mu_d$,
the discrepancy \eqref{disc not minimized}
is at least $\frac{1}{2}c_{\text{\tiny\ref{thm:lb}}}\log\log n$ with probability $1-o(1)$, where $c_\lb = \frac{1}{4}$. 
\end{theorem}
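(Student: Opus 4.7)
The plan is to prove the bound via a multi-stage inductive construction that forces some coordinate's partial sum to grow by one unit per stage, reaching magnitude $K := \lceil \frac{1}{8}\log \log n \rceil$ after at most $O(n)$ rounds. Concretely, I would build times $0 = t_0 < t_1 < \dots < t_K$ and disjoint sets $S_k^+, S_k^- \subseteq [n]$ at time $t_k$ with $a_i(t_k) \ge k$ for $i \in S_k^+$ and $a_i(t_k) \le -k$ for $i \in S_k^-$ (where $a(t) := \sum_{s \le t}\sigma_s X_s$), of sizes $|S_k^\pm| \ge n_k$ for a sequence $n_k$ decaying double-exponentially in $k$. In the base stage, $t_1 = \Theta(n)$ rounds suffice: each coordinate is hit $\Theta(d)$ times in expectation, so a constant fraction have odd hit-parity and hence $|a_i(t_1)| \ge 1$; assuming for the sake of contradiction that the running discrepancy stays below $K$ throughout (otherwise we are already done), the conservation identity $\sum_i a_i(t) = d\sum_{s \le t} \sigma_s$ forces this linear fraction into a roughly balanced split, giving $|S_1^\pm| \ge \Omega(n)$.

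The inductive step from $(S_k^+, S_k^-)$ to $(S_{k+1}^+, S_{k+1}^-)$ is the crux. In a block of $\Delta_{k+1}$ rounds following $t_k$, I call a column \emph{forcing at level $k$} if its support intersects both $S_k^+$ and $S_k^-$. The key deterministic observation is that any forcing column, regardless of the algorithm's sign choice, must push at least one of its support coordinates to absolute partial sum at least $k+1$: if $i \in S_k^+$ and $j \in S_k^-$ both lie in the support, then $\sigma_t = +1$ sends $a_i \mapsto a_i + 1 \ge k+1$, while $\sigma_t = -1$ sends $a_j \mapsto a_j - 1 \le -(k+1)$. The probability a uniformly random $d$-sparse column is forcing is at least $c_d \cdot |S_k^+||S_k^-|/n^2$ for a positive constant $c_d$, so Chernoff concentration gives $\Omega(c_d \Delta_{k+1} n_k^2/n^2)$ forcing columns with high probability. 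Choosing $\Delta_{k+1} = \Theta(n/K)$ so that $\sum_k \Delta_k = O(n)$, this yields the recurrence $n_{k+1} \gtrsim n_k^2/(Kn)$, i.e., $n_k \ge n/C^{2^k}$, which remains at least $1$ for $k$ up to $K = \Theta(\log\log n)$.

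The main obstacle is controlling the perturbation of $S_k^\pm$ across stages: later columns may hit coordinates in $S_k^\pm$, and the algorithm has freedom to assign signs so as to disrupt the structure. I would address this by defining $S_k^\pm$ restrictively, requiring each of its coordinates to remain \emph{untouched} during all subsequent blocks; since each coordinate is hit during the remainder with probability $O(d\log\log n / K) = o(1)$ (for $d$ not too large), a constant fraction survive, costing only a constant factor in $n_k$. A subtler issue is maintaining $|S_k^+| \asymp |S_k^-|$ at every level: the algorithm could in principle bias signs, but the contradiction assumption combined with the conservation identity (applied stagewise) restricts the net imbalance enough to salvage a balanced split by pigeonhole. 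Finally, the range $d \ge (\log\log n)^2$ can be handled separately via the offline lower bound $\Omega(\sqrt d) \ge \log \log n$, which already exceeds the target; thus the stage-based construction need only be executed cleanly for $d = O((\log\log n)^2)$, where the depletion and balance bookkeeping above can all be made quantitative.
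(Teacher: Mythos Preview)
Your multi-stage forcing framework is the right shape and is close to the paper's argument, but the balance step has a genuine gap. The paper maintains, at stage $q$, an $(\ell,r)$--spread: disjoint index sets $I^{(\ell)},I^{(r)}$ with $\langle \row_i(A),\sigma\rangle_{[k]}=\ell$ (resp.\ $=r$) for some $\ell\le 0\le r$ with $r-\ell\ge q$. A forcing column hits each set once, and regardless of the sign, either some row reaches $r+1$ or some row reaches $\ell-1$; in either case the \emph{gap} $r-\ell$ grows. The paper never asks for symmetry $\ell=-r$; only at the very end does a single pigeonhole give $\max(|\ell|,r)\ge q_0/2$.

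Your version requires both $|S_k^+|$ and $|S_k^-|$ large at symmetric levels $\pm k$, and this is where the argument breaks. At every forcing column the algorithm chooses which side to feed: it can set $\sigma_t=+1$ at all forcing events, so $S_{k+1}^+$ is populated while $S_{k+1}^-$ stays empty. The conservation identity $\sum_i a_i(t)=d\sum_{s\le t}\sigma_s$ is far too coarse to rule this out: under the contradiction hypothesis $|a_i|<K$ it yields only $|\sum_i a_i|<nK$, while the imbalance you need to exclude is of order $n_k\ll n$, and the algorithm can absorb an $O(n_k K)$ bias in $\sum_i a_i$ by adjusting signs on the many non-forcing columns in the block at no discrepancy cost. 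So the ``balanced split by pigeonhole'' does not go through, and the induction stalls after one step. The fix is exactly the paper's asymmetric $(\ell,r)$--spread: track the gap and defer the pigeonhole to the end.

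A secondary quantitative issue: with equal blocks $\Delta_k=\Theta(n/K)$, a fixed coordinate is untouched in the next block with probability $e^{-\Theta(d/K)}$, which is not $1-o(1)$ once $d\gtrsim K=\Theta(\log\log n)$; your claimed $O(d\log\log n/K)=o(1)$ is off. The paper instead takes blocks of length $\lceil n/(\log n)^{e^q}\rceil$, short enough that the untouched event (event $\mathcal E_a$) holds with probability $1-o(1)$, while the spread sizes $s_q=n/(\log n)^{3^q}$ still satisfy the forcing-column recurrence.
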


As a counterpart of the result, we show that our lower bound
is optimal for the range of $d$ in which $\log\log n \ge \widetilde{\Omega}(\sqrt{d})$.

\begin{theorem}[Upper bound]\label{thm:ub}
Let $T = n$ and $n$ be a large integer.
Let $2\leq d\leq \frac{(\log \log n)^2}{\log\log\log n}$.
Assume that $X_1,\dots,X_T$ are i.i.d distributed according to $\mu_d$.
Then there is an online algorithm
producing signs $\sigma=(\sigma_1,\dots,\sigma_T)$,
such that with probability $1-o(1)$,
the prefix discrepancy \eqref{disc not minimized}
is bounded above by $C_{\text{\tiny\ref{thm:ub}}}\,\log\log n$,
where $C_{\text{\tiny\ref{thm:ub}}} = 35$.
\end{theorem}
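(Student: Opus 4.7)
I would approach \cref{thm:ub} via a potential-function method tuned to give a $\log\log n$ bound rather than the classical $\sqrt{\log n}$ one obtains from $\cosh$-type potentials. Fix a weight function $w:\mathbb{Z}_{\ge 0}\to\mathbb{R}_+$ with doubly-exponential growth, e.g.\ $w(k) \asymp \exp(c\,2^{k})$, and define the global potential
\[
    \Phi(s) = \sum_{i=1}^n w(|s_i|), \qquad s\in\mathbb{Z}^n.
\]
The algorithm is the greedy rule: upon seeing $X_t$, set $\sigma_t\in\{-1,+1\}$ to minimize $\Phi(s(t-1)+\sigma_t X_t)$, then update $s(t) = s(t-1)+\sigma_t X_t$. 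Each update touches only the $d$ coordinates of $\supp(X_t)$, giving near-linear running time. The whole argument is then to show that $\Phi(s(t))$ stays below a polylog-in-$n$ threshold with probability $1-o(1)$, and to invert $w$ to obtain a bound on $\max_i |s_i(t)|$.

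The technical heart of the proof is a one-step drift bound of the form
\[
    \Exp\bigl[\Phi(s(t)) - \Phi(s(t-1))\,\bigm|\,\calF_{t-1}\bigr] \;\le\; o\bigl(\Phi(s(t-1))/n\bigr) + O(d),
\]
valid as long as $\Phi(s(t-1))$ lies below a ``safety'' threshold $\Phi_\star = n\cdot(\log n)^{C}$. To prove this, I would decompose $\min(\Delta^+,\Delta^-)=\tfrac{1}{2}(\Delta^++\Delta^-)-\tfrac{1}{2}|\Delta^+-\Delta^-|$, where $\Delta^\pm = \Phi(s(t-1)\pm X_t) - \Phi(s(t-1))$. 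After averaging over the uniform support $S_t$, the symmetric part is $\tfrac{d}{n}$ times a sum of second differences of $w$, and the antisymmetric part is the absolute value of a signed sum with coefficients $\mathrm{sgn}(s_i)(w(|s_i|+1) - w(|s_i|-1))$. Because $w$ grows doubly exponentially, the antisymmetric term at each $i$ with $|s_i|\ge 1$ dominates the symmetric one, so greedy gives a \emph{negative} per-coordinate contribution; one needs to argue this beats the inevitable ``mixed sign'' configurations within $S_t$ in which the antisymmetric sum cancels. The sparsity hypothesis $d\leq (\log\log n)^2/\log\log\log n$ should enter precisely at this point to control the frequency and effect of mixed-sign supports.

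With the drift bound in hand, introduce the stopping time $\tau = \min\{t\,:\,\Phi(s(t))> \Phi_\star\}$ and apply Doob's maximal inequality to the supermartingale $\Phi(s(t\wedge\tau))$, obtaining $\Prob[\tau \le T] = o(1)$. On the event $\{\tau > T\}$, every row satisfies $w(|s_i(t)|) \le \Phi_\star$, which inverts (since $w$ is doubly exponential) to
\[
    \max_{t\le T}\,\max_i |s_i(t)| \;\le\; \log_2\log n \cdot (1+o(1)).
\]
Tracking the constants in the weight $w$, the threshold $\Phi_\star$, and the drift bound should yield the explicit prefactor $35$. Extending the argument from online to prefix discrepancy is automatic, since the potential is a function of $s(t)$ evaluated at every time $t$.

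The main obstacle I expect is that the doubly-exponential weight makes the martingale increments enormous: a single step can multiply a single row's contribution by a factor $w(|s|+1)/w(|s|) \asymp \exp(2^{|s|})$, which may even exceed the entire current potential, so uniform boundedness of increments fails. I would handle this by (i) using $\tau$ to cap per-row contributions by $\Phi_\star$; (ii) using column sparsity to bound the per-step change by $d\cdot\Phi_\star$; and (iii) applying a Freedman-type martingale inequality balancing this worst-case increment against the mean drift, the balance being precisely what forces the range of $d$ in the statement. If the bare greedy rule does not produce enough net drift, a natural fallback is to split incoming columns into ``aligned'' and ``mixed'' batches and process them with different sub-rules, letting the aligned batches supply the systematic decrease in $\Phi$ that offsets the mixed ones.
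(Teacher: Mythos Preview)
Your approach is genuinely different from the paper's, and as written it contains a concrete gap that I do not see how to close.

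The paper does \emph{not} use a potential function and its algorithm is not greedy. By default it assigns a pre-sampled i.i.d.\ sign $\widetilde\sigma_t$; it intervenes only when the support of the incoming column contains \emph{exactly one} row whose running sum has ever exceeded a fixed threshold $\Theta(\log\log n)$ (an ``exceptional'' row), in which case it picks $\sigma_t$ to shrink that row. The analysis is combinatorial. First one shows $|\bfE_T|\le n d^{-5}$ w.h.p.\ by sorting exceptional rows into categories $\bfC_{T,w}$: category~$0$ are the rows that would be exceptional under the i.i.d.\ signs $\widetilde\sigma$ alone (maximal Bernstein, and this is where the random-by-default rule is essential), while category~$w$ consists of rows whose support collides in two distinct columns with category-$(w{-}1)$ rows; the sizes shrink doubly-exponentially in $w$ by moment bounds. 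Second, for a row's discrepancy to climb three further units above any level it has reached, its support must collide three times with other already-high rows (else the algorithm corrects it); a Markov-inequality cascade in $k$ then gives $|\{i:\text{row }i\text{ ever reaches level }k\}|\le n(\log n)^{-2\cdot 2^k}$, which is empty at $k\approx 2\log\log n$.

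The specific error in your plan is step~(ii). At time $\tau-1$ each row has $w(|s_i|)\le \Phi_\star$, but $w(|s_i|+1)=\exp(c\,2^{|s_i|+1})=\bigl(\exp(c\,2^{|s_i|})\bigr)^{2}$ is of order $w(|s_i|)^{2}\le \Phi_\star^{2}$, not $\Phi_\star$. So the per-step increment is bounded only by $d\,\Phi_\star^{2}$, and any Freedman-type inequality with $M=d\,\Phi_\star^2$ already has its $M\log(1/\delta)$ term exceeding the target $\Phi_\star$; this squaring is intrinsic to a doubly-exponential weight. Separately, the drift bound is asserted but not established: when the support of $X_t$ contains rows of opposite signs and comparable $|s_i|$, the antisymmetric sum $\sum_{i\in S_t}\mathrm{sgn}(s_i)\bigl[w(|s_i|+1)-w(|s_i|-1)\bigr]$ can vanish while the symmetric second-difference sum remains of order $w(\max_{i\in S_t}|s_i|+1)$, and you give no quantitative control on the frequency of such columns. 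Obtaining that control requires an a~priori bound on the number of rows with large $|s_i|$, which is precisely what you are trying to prove; the paper breaks this circularity via the random-by-default rule (so that category~$0$ can be analyzed under $\widetilde\sigma$ alone), a coupling your greedy rule does not provide.
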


The choice of $\sigma$ achieving the upper bound in Theorem~\ref{thm:ub}
is given below as Algorithm~\ref{alg}.
This online algorithm is implementable in time and storage complexity that is near--linear in the input size (i.e., in the parameter $nd$).

\begin{algorithm}
\caption{Online algorithm for discrepancy minimization of ultra--sparse random matrices, $T=n$}\label{alg}
\begin{algorithmic}
\State Set $T \gets n$ 
\State Set $C_\alg \gets 28$ \Comment{A sufficiently large constant} 
\State Sample $A \sim \mathcal{M}_{n,T,d}$
\State Sample $\widetilde \sigma \sim \mathrm{uniform}\{-1,+1\}^T$ \Comment{Random algorithm seed; will be used in analysis}

\vspace{.5em}

\State $\bfE_0 \gets \emptyset$ \Comment{List of rows with ``exceptional'' running products}

\For{$t \in [T]$}
    \State Reveal $\mathrm{col}_t(A)$
    \If{there is $i_*\in \supp(\mathrm{col}_t(A))\cap \bfE_{t-1}$
    such that
    \Statex \hspace{\algorithmicindent}$|\la \mathrm{row}_{i_*}(A),\, \sigma \ra_{[t-1]}|>
    |\la \mathrm{row}_{i}(A),\, \sigma \ra_{[t-1]}|$ for all $i\in
    \supp(\mathrm{col}_t(A))\setminus\{i_*\}$}
        \State $\sigma_t \gets -\mathrm{sign}\big(\la \row_{i_*}(A),\, \sigma\ra_{[t-1]}\big)$ \Comment{Note: $|\langle \mathrm{row}_{i_*}(A),\, \sigma \rangle_{[t]}| <|\la \mathrm{row}_{i_*}(A),\, \sigma \ra_{[t-1]}|$}
    \Else
        \State $\sigma_t \gets \widetilde \sigma_t$
    \EndIf

    \State $\bfE_{t} \gets \bfE_{t-1} \cup \{i\,:\, |\la \mathrm{row}_i(A),\, \sigma \ra_{[t]}| \ge C _\alg \log\log n \}$ \Comment{(Exceptional rows stay exceptional always)}
\EndFor

\vspace{.5em}
\State \textbf{Ensure:} $\max\limits_{t\leq T}\|\sum_{s\leq t}\sigma_s\col_s(A)\|_\infty \le (C_\alg+7) \log\log n$ \Comment{That is, $C_{\ub} := C_\alg + 7$.}
\State \textbf{Return:} $\sigma$
\end{algorithmic}
\end{algorithm} 

In words, our algorithm is: given the $t$'th column, check if there is a row supported in this column with both (1) exceptionally large prefix discrepancy, and (2) the \textit{strictly} largest current discrepancy among all rows supported in the column. If these conditions are met, we select $\sigma_t \in \{-1,+1\}$ to improve the balance of this row. Otherwise, we simply give up and assign $\sigma_t$ randomly.

\subsection{Previous work} The study of online discrepancy was initiated by Spencer \cite{spencer-online} in 1977. We refer the reader to the excellent ICM survey of Bansal \cite{bansal-survey} for a general account of the algorithmic developments in discrepancy theory, both offline and online. 

Regarding gaps between online and offline discrepancy, a work of Kulkarni, Reis, and Rothvoss \cite{optimal-online} established the dimension--dependent lower bound of $\sqrt{\log T}$ for online discrepancy in the Koml\'os setting\footnote{That is, for balancing $T$ vectors of unit Euclidean norm.} (in contrast with the conjectured dimension--free discrepancy of Koml\'os matrices in the offline setting). Their lower bound for online Koml\'os is provided by constructing a sparse block--diagonal matrix populated by coupled random signs. This work also develops an algorithm---which does not run in polynomial time---that achieves discrepancy matching the lower bound, establishing the (existential) asymptotic threshold for the worst--case online Koml\'os instances. The threshold for efficient algorithms remains open. 

Towards studying the \textit{average case} of online discrepancy minimization, Bansal et al \cite{bansal-online} consider the general setting of iid arrivals from arbitrary distributions, but sharp estimates remain open. Independent arrivals from dense, homogeneous distributions (such as uniform Rademacher vectors, in the average--case online Spencer setting), are quite well understood \cite{ALS2,bansal-meka,kim-roche}.

In the average case offline Koml\'os setting, one natural model is to take a matrix of iid random Gaussians.  This model was introduced in the statistical physics literature as a ``symmetric binary perceptron'' \cite{APZ} and has since been exactly solved  \cite{ALS1,dja-crit,PX,ss}. It turns out that the Gaussian model has the same behavior as matrices with iid Rademacher entries, and thus is far closer to modelling the average--case of Spencer's setting \cite{spencer1985six} rather than the Koml\'os setting.

More relevant to our setting, sparse random binary matrices have been used to model the average case offline Beck--Fiala setting. The Beck--Fiala conjecture asserts that any binary matrix with each column having support of at most $d$ has discrepancy $O(\sqrt{d})$. For random matrices with iid columns from $\mu_d$, Bansal and Meka \cite{bansal-meka} established the validity of the Beck--Fiala conjecture as well as provided a matching efficient algorithm for such matrices. Our result is in direct contrast: the Beck--Fiala conjecture fails for the {\it online} setting, even in the average case. There is also a significant body of work on the offline discrepancy of sparse binary matrices with divergingly wide aspect ratios. Interestingly, these matrices also have discrepancy that is independent of sparsity (see \cite{dja-jnw} and references therein).

A particular sub-setting of note within the online Beck-Fiala problem is the case of $d = 2$, known as the carpool or online edge orientation problem. For the average case of this problem, in which uniformly random $2$-sparse vectors must be signed online, a work of Ajtai et al. \cite{ajtai-carpool} established an analog of our main theorem for $d = 2$. For the worst case carpool problem, it also shown in \cite{ajtai-carpool} that the discrepancy is at least $\log(n)^{1/3}$, which falls short of the $\log(n)^{1/2}$ lower bound of \cite{optimal-online}. There is a substantial literature on various settings of the carpool problem; for modern developments, we refer to the discussions in \cite{expander-carpool,optimal-online}.

Finally, in the setting of offline worst--case Beck--Fiala and Komlos settings, an extremely recent landmark breakthrough of Bansal and Jiang \cite{bansal-jiang2,bansal-jiang1} established the Beck--Fiala conjecture under a sparsity lower bound: any matrix $M \in \{0,1\}^{n \times n}$ with $d$--sparse columns has discrepancy $O(\sqrt{d})$ as long as $d \ge (\log n)^2$. This, as well as a potential full resolution of the Beck--Fiala or Koml\'os conjecture, is not directly related to our results. Their upper bound is also implementable with an efficient algorithm.

\subsection{Discussion}
In order to concisely formulate some open problems, we introduce
the following notion.
\begin{definition}[Optimal expected online discrepancy]\label{def:online}
Let $\mu$ be a probability distribution in $\RR^n$,
and $T$ a time horizon.
Define the {\it optimal expected online discrepancy} as
$$
\online(\mu,T):=\inf\limits_{\sigma=(\sigma_1,\dots,\sigma_T)}\Exp\Big[\,\max\limits_{t\leq T}
\Big\|\sum_{s\in [t]} \sigma_s X_s\Big\|_\infty \Big],
$$
where $(X_1,\dots,X_T)\sim \mu^{\otimes T}$, and the infimum is taken over all online algorithms $\sigma=(\sigma_1,\dots,\sigma_T)$ (as in \cref{def:online}).
\end{definition}

From the main results of this note we see that, with $\mu_d$ as given in \cref{def:matrix model},
$$
\online(\mu_d,n)=\Theta(\log\log n),
$$
whenever $2\leq d\leq \frac{(\log\log n)^2}{\log\log\log n}$.
Our estimates leave open the problem of
identifying the optimal online discrepancy
in the regime of parameters $d=\Omega((\log \log n)^2)$.
We make the following conjecture:
\begin{conjecture}
For all $2\leq d\leq n/2$,
$$
\online(\mu_d,n)=\Theta(\max(\sqrt{d},\log\log n).
$$
\end{conjecture}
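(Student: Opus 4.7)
The conjecture decomposes into a matching lower and upper bound. The lower half $\online(\mu_d, n) = \Omega(\max(\sqrt{d}, \log\log n))$ is essentially already available: \cref{thm:lb} yields $\Omega(\log\log n)$, and a standard first-moment argument for offline hypergraph discrepancy (already invoked in the introduction) yields $\Omega(\sqrt{d})$, which a fortiori lower bounds the online quantity. So the lower bound requires no new input, and the work is entirely on the algorithmic side.

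For the upper bound, observe that for $d \le (\log\log n)^2 / \log\log\log n$, \cref{thm:ub} already delivers $O(\log\log n) = O(\max(\sqrt{d},\log\log n))$. What remains is an online algorithm achieving $O(\sqrt{d})$ discrepancy in the complementary regime $d = \Omega((\log\log n)^2)$. The natural plan is to modify \cref{alg} by raising the exceptional-row threshold from $C_\alg \log\log n$ to $C'\sqrt{d}$ for an appropriate constant $C'$. The analysis would then parallel \cref{thm:ub}: one establishes (i) a spread-type lemma controlling the growth of the exceptional set $\bfE_t$ under the modified rule; (ii) that the probability a newly arrived column intersects $\bfE_{t-1}$ in two or more rows is summable in $t$; and (iii) that non-exceptional rows stay below $O(\sqrt{d})$ by an Azuma-type concentration argument on the independent random signs used in the non-exceptional branch.

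The principal obstacle lies in items (i) and (ii) when $d$ is large. As $d$ approaches $n$, each column is dense and intersects virtually every sizable subset of rows, so the exceptional-row paradigm of \cref{alg}, which implicitly exploits sparsity to decouple typical columns from a slowly growing $\bfE_t$, breaks down. In that regime one likely must switch mechanisms, using an online analogue of the Bansal--Meka random walk or of the self-balancing walk of Alweiss--Liu--Sawhney: instead of assigning a fresh random sign to $\sigma_t$ in the non-exceptional branch, one would choose $\sigma_t$ correlated with the current partial sum so as to damp the growth of $\big\|\sum_{s\le t}\sigma_s \col_s(A)\big\|_\infty$. For $d = \Theta(n)$ the matrix is essentially Bernoulli$(d/n)$ and falls in the dense Spencer-type regime where such walks are known to achieve $O(\sqrt{n}) = O(\sqrt{d})$ online.

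The genuine difficulty is the intermediate regime $(\log\log n)^2 \ll d \ll n$, where one must interpolate between the two paradigms. I expect the cleanest route is a single unified algorithm --- the threshold-modified \cref{alg}, with the non-exceptional-branch random seed replaced by a correlated self-balancing sign --- whose analysis combines the combinatorial covering argument for $\bfE_t$ with sharper martingale concentration on non-exceptional rows, verifying throughout that each decision respects the online, irrevocable constraint. The technical crux is the precise choice of correlated sign: it must retain enough independence from the past that the martingale inequalities used in step (iii) still apply, while simultaneously biasing the walk strongly enough to overcome the $\sqrt{\log n}$ gap between naive random signing (which gives $O(\sqrt{d \log n})$) and the conjectured target $O(\sqrt{d})$.
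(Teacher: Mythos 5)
There is a genuine gap, and it is unavoidable: the statement you are addressing is stated in the paper as an open \emph{conjecture}, not a theorem. The paper itself only supplies the lower bound (the $\Omega(\log\log n)$ part via \cref{thm:lb}, and $\Omega(\sqrt{d})$ by citing the offline first-moment bound) and the matching upper bound in the restricted regime $2\le d\le (\log\log n)^2/\log\log\log n$ via \cref{thm:ub}; the authors explicitly state that establishing the upper bound $O(\sqrt{d})$ for $d=\Omega((\log\log n)^2)$ is the open problem. Your proposal correctly assembles the known pieces and correctly reduces the conjecture to exhibiting an online algorithm with discrepancy $O(\sqrt{d})$ in the complementary regime, but from that point on what you give is a research plan, not a proof. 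Items (i)--(iii) are unverified claims about a modified \cref{alg}, and the decisive step is precisely the one you flag and leave unresolved: any argument that treats the non-exceptional rows by a union bound over $n$ rows with independent (or insufficiently correlated) signs yields $O(\sqrt{d\log n})$, not $O(\sqrt{d})$, and closing that $\sqrt{\log n}$ factor online is exactly the content of the conjecture.

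Two further points in the sketch would not survive scrutiny as written. First, the appeal to the $d=\Theta(n)$ case as ``known'' rests on the Bansal--Spencer result for dense matrices of \emph{signs}; columns drawn from $\mu_d$ have nonnegative entries with fixed row-sum drift, and transferring an $O(\sqrt{n})$ online guarantee to this model requires an argument you do not supply. Second, the covering/decoupling analysis of $\bfE_t$ in the paper's proof of \cref{thm:ub} (via the categories $\bfC_{t,w}$ and \cref{lemma:exceptional row bound}) uses $d\le (\log\log n)^2/\log\log\log n$ quantitatively --- e.g.\ the bound $\mu_0 < d^{-3C_{\excep}}$ in \eqref{eq:mu0} compares $C_{\alg}^2\log\log\log n$ against $\log d$ --- so simply raising the threshold to $C'\sqrt{d}$ does not make the existing analysis go through for larger $d$; the interaction between the threshold, the size of $\bfE_t$, and the collision probabilities must be re-derived, and in the intermediate regime it is not known how to do so. In short: the lower-bound half of your write-up is fine and matches the paper, but the upper-bound half is a conditional outline of the same open problem the paper poses, and so the statement remains unproved.
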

Note that the lower bound in the above conjecture is given 
in Theorem~\ref{thm:lb}, and the problem is to establish
a matching upper bound (which we believe should be attainable by an efficient algorithm). One can further ask what the optimal online discrepancy
is, viewed both as a function of $d$ and $T$, for arbitrary time horizon $T=\Omega(n)$. In particular, it is unclear what dependence to expect on $T$ (c.f. \cite{bansal-spencer}, in which Bansal and Spencer establish an online algorithm for random dense matrices of signs achieving $\sqrt{n}$ discrepancy for $T = e^{o(n)}$.)

\begin{question}
    What is the asymptotic dependence of $\online(\mu_d,T)$ on $T$?
\end{question}

\medskip

\noindent {\bf Acknowledgments.} K.T. 
is partially supported by NSF grant DMS 2331037. We thank V. Reis for bringing \cite{ajtai-carpool} to our attention.

\section{Lower bound}

This section is dedicated to the proof of \cref{thm:lb}. We begin by assuming $T = n$ and provide a simple reduction argument for arbitrary $T \ge n$ at the end of the section. Our starting goal is to establish that any online algorithm $\sigma$ satisfies:
\begin{equation*}
\mbox{for }T = n\;\mbox{and}\;2\leq d \le (\log\log n)^2,\;\;
\mbox{with high probability, }
    \|A\sigma\|_\infty \ge c_\lb \sqrt{\log\log n}.
\end{equation*}
For the sake of the eventual treatment of arbitrary $T \ge n$, we will actually prove the following strengthening:
\begin{equation}\label{eq:sparse-LB}
\mbox{for }T = n\;\mbox{and}\;2\leq d \le (\log\log n)^2,\;\;
\mbox{with high probability, }
    \big|\big\{i\,:\,\big|(A\sigma)_i\big| \ge c_\lb \sqrt{\log\log n}\big\}\big| > n^{1/4}.
\end{equation}

The main idea of the proof is to keep track
of pairs of rows with different magnitudes of partial products
and use the fact that, for certain realizations of a $t$--th column,
either choice of sign $\sigma_t$ leads to a widening
gap between the partial products.
As a toy example, suppose that after $t-1$ steps
of the algorithm we are given four rows $\row_{i_u}(A)$, $u=1,2,3,4$,
such that for some numbers $\ell\leq r$ we have
\begin{align*}
&\langle \row_{i_u}(A),\sigma\rangle_{[t-1]}=\ell,\quad u=1,2;\\
&\langle \row_{i_u}(A),\sigma\rangle_{[t-1]}=r,\quad u=3,4.
\end{align*}
Restricted to those four rows, the largest difference
between the magnitudes of partial products is $r-\ell$.
Next, assume that support of the $t$--th column of $A$ contains
$i_1$ and $i_3$ and does not contain $i_2$ and $i_4$.
Then, regardless of the choice of $\sigma_t$,
$$
\max\limits_{u=1,\dots,4}\langle \row_{i_u}(A),\sigma\rangle_{[t]}
-
\min\limits_{u=1,\dots,4}\langle \row_{i_u}(A),\sigma\rangle_{[t]}
=r-\ell+1,
$$
i.e the gap between the extreme values of the partial products grows.
For a rigorous implementation of the above idea and
towards establishing \eqref{eq:sparse-LB}, the key notion is the following: 
\begin{definition}
Let $(\ell,r,k,s)$ be integers satisfying $\ell\leq 0\leq r$,
$0\leq k\leq n$, and $s\geq 0$. Say that 
the pair $(A,\sigma)$ ``contains an $(\ell,r)$--spread
of size at least $s$ at time $k$'' if there exist collections of indices 
\[
    (i^{(\ell)}_j)_{j \in [s]} \quad \text{ and } \quad  (i^{(r)}_j)_{j \in [s]},
\]
such that: all $2s$ indices are distinct, and for every $1 \leq s' \le s$, it holds:
\[
    \langle \row_{i^{(\ell)}_{s'}}(A),\sigma\rangle_{[k]}=\ell; \quad \langle \row_{i^{(r)}_{s'}}(A),\sigma\rangle_{[k]}=r.
\]

\end{definition}

The main technical ingredient of the proof is the following lemma:
\begin{lemma}\label{lemma:lb spread}
Let $\Cspread := 3$. Fix any choice of online algorithm $\sigma$, let $1\leq q\leq \frac{\log\log n}{2\log \Cspread}$, and define
$$
k:=\sum_{u=1}^q \Big\lceil\frac{n}{(\log n)^{e^u}}\Big\rceil.
$$
Then with probability at least $1-\frac{q}{\log n}$ there are
$\ell\leq 0\leq r$ with $r-\ell\geq q$ such that 
$(A,\sigma)$ contains an $(\ell,r)$--spread
of size at least $\frac{n}{(\log n)^{\Cspread^q}}$ at time $k$.
\end{lemma}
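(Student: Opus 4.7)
My plan is to prove the lemma by induction on $q$, with each inductive step increasing the spread magnitude by one.

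For the base case $q=1$, at time $k_1 = \lceil n/(\log n)^e \rceil$ the total number of column--row incidences is $d k_1 \le d n/(\log n)^e$, so at least $n - d k_1 \ge n/2$ rows remain untouched at value $0$. With high probability a constant fraction of the touched rows are hit exactly once (the number of multiply-hit rows being controlled by a routine Chernoff bound on the Poisson-type occupancies), those rows take values in $\{-1, +1\}$ according to the algorithm's signs, and a pigeonhole over these signs gives at least $\tfrac{1}{4} d k_1$ rows at one of $\pm 1$. This produces either a $(0,+1)$- or $(-1,0)$-spread of size comfortably exceeding $n/(\log n)^3$, establishing the base case.

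For the inductive step from $q-1$ to $q$, suppose at time $k_{q-1}$ there is an $(\ell, r)$-spread of size $n/(\log n)^{3^{q-1}}$ with $r-\ell \ge q-1$, witnessed by disjoint sets $L$ (rows at value $\ell$) and $R$ (rows at value $r$). During the next $\Delta k_q = \lceil n/(\log n)^{e^q} \rceil$ columns, call a column \emph{useful} if its support meets both $L$ and $R$. Exactly as in the toy example of the introduction, any useful column forces the spread magnitude to grow by one irrespective of $\sigma_t$: for $\sigma_t=+1$ the hit $R$-row moves to $r+1$ (witnessed against the unchanged $\ell$-rows), and for $\sigma_t=-1$ the hit $L$-row moves to $\ell-1$ (witnessed against the unchanged $r$-rows). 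Pigeonholing the sign over the useful columns, at least half produce a new extreme row at the same value, which combined with the untouched opposite side gives an $(\ell', r')$-spread of magnitude $q$.

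Conditioning on $\calF_{k_{q-1}}$ (which determines $L$ and $R$), the next $\Delta k_q$ columns are i.i.d.\ from $\mu_d$, and a routine expansion of the hypergeometric probability yields that each is useful with probability $p = \Theta(d(d-1)|L||R|/n^2)$. Chernoff's inequality then gives concentration of the useful count around $\Delta k_q \cdot p$ with failure probability $O(1/\log n)$ per step; a union bound over the $q$ steps yields the advertised $q/\log n$ failure bound.

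The main obstacle I anticipate is verifying the numerical balance $\tfrac{1}{2} \Delta k_q \cdot p \ge n/(\log n)^{3^q}$ at every step. Using only the guaranteed lower bound $|L|, |R| \ge n/(\log n)^{3^{q-1}}$ this inequality is tight at best, and even borderline false for small $q$; to close the induction one must propagate the \emph{actual} realized set sizes rather than just the stated lower bounds. In particular, the ``bulk'' set $L$ at the central value retains size $\Theta(n)$ throughout (only $O(k_q) \ll n$ rows are ever perturbed), and this enlarged $|L|$, combined with the specific choice of exponents $3^q$ versus $e^q$ (where $3^q$ eventually dominates $e^q$), supplies the slack needed to close the induction uniformly across all admissible $q$.
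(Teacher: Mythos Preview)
Your induction and the notion of a ``useful column'' match the paper's, but there is a genuine gap in the inductive step. Declaring a column $j\in(k_{q-1},k_q]$ useful as soon as its support meets both $L$ and $R$ is not enough: if the hit $R$--row $i$ also lies in the support of some other column $j'$ in the same interval, then $\langle\row_i(A),\sigma\rangle_{[k_q]}$ need not equal $r+1$ (indeed $\langle\row_i(A),\sigma\rangle_{[j-1]}$ need not even equal $r$). Your pigeonhole over signs therefore does not produce rows whose value \emph{at time $k_q$} is the claimed extreme. The paper's event $\mathcal{E}_b$ adds precisely this missing clause: it requires $|\supp(\col_j(A))\cap I^{(\ell)}|=|\supp(\col_j(A))\cap I^{(r)}|=1$ \emph{and} that these two rows are disjoint from $\bigcup_{t\ne j,\,k_{q-1}<t\le k_q}\supp(\col_t(A))$. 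Only with this ``hit exactly once in the interval'' condition can the value at time $k_q$ be read off. A companion event $\mathcal{E}_a$ separately certifies that at least $s/2$ rows in each of $I^{(\ell)},I^{(r)}$ are untouched throughout the interval, which is what makes your phrase ``witnessed against the unchanged opposite side'' rigorous. Both events are established by second--moment (Chebyshev) computations rather than Chernoff.

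Your proposed fix for the numerical balance is also misconceived. The inductive hypothesis furnishes an $(\ell,r)$--spread with $\ell\le 0\le r$, and each step passes to either $(\ell-1,r)$ or $(\ell,r+1)$; after several steps both endpoints can be far from $0$, so neither $L$ nor $R$ is ``the central value,'' and there is no mechanism in your argument keeping either of size $\Theta(n)$. The paper does not track realized sizes at all: it starts the induction at the trivial $q=0$ (all $n$ rows at value $0$) rather than your $q=1$, and closes the exponent comparison $2\cdot\Cspread^{q-1}+e^{q}$ versus $\Cspread^{q}$ directly from the stated lower bound $s_{q-1}$ and the choice $\Cspread=3>e$, not from any $\Theta(n)$ reservoir.
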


We begin by showing that the above lemma implies \eqref{eq:sparse-LB}.

\begin{proof}[Proof of \eqref{eq:sparse-LB}] 
Define the following parameters: 
\begin{equation}\label{eq:LB params}
    q_0:=\Big\lfloor \frac{\log\log n}
{2\log \Cspread}\Big\rfloor; \quad k_0:=\sum_{u=1}^{q_0} \Big\lceil\frac{n}{(\log n)^{e^u}}\Big\rceil; \quad s_0 := \frac{n}{(\log n)^{\Cspread^{q_0}}}.
\end{equation}
Then Lemma~\ref{lemma:lb spread} implies that with probability at least $1-\frac{q_0}{\log n}$, the pair
$(A,\sigma)$ contains an $(\ell,r)$--spread
of size at least $s_0$ at time $k_0$, for some 
$\ell\leq 0\leq r$ with $r-\ell\geq q_0$. Define $\calE_{\text{spread}}$ as the $\calF_{k_0}$--measurable event that such a spread exists.  Define the $\calF_{k_0}$--measurable quantity
$$
I := \big\{i\in [n]\,:\, \big|\langle \row_{i}(A),
\sigma\rangle_{[k_0]}\big|\geq \frac{q_0}{2}\big\}.
$$
Note that conditioned on $\calE_{\text{spread}}$, it holds conditionally almost surely for $n$ sufficiently large, 
\[
    |I|\geq \frac{n}{(\log n)^{\Cspread^{q_0}}} \ge n\, e^{-O(\sqrt{\log n}\, \log\log n)} > \sqrt{n}.
\]
By independence of the columns of $A$, the $\calF_{k_0}$--measurability of $I$, and the construction of $(\sigma,\xi)$ as an ``online'' assignment, we then have
\begin{align*}
    \mathbb P\Big[\big\{\exists i\in I:\;a_{ij}=0\mbox{ for all }k_0<j\leq n\big\} \,\big|\,\calE_{\text{spread}},\,|I| = m\Big] &= 
    \mathbb P\Big[\bigcup_{i \in [m]}\{a_{ij}=0\mbox{ for all } j \in [n-k_0]\big\}\Big] \\
    &\le \mathbb P\Big[\bigcup_{i \in [m]}\{a_{ij}=0\mbox{ for all } j \in [n]\big\}\Big]\,.
\end{align*}
Towards bounding this last expression with a variance computation, let $Z_i$ be the indicator that the $i$'th row of $A$ is the zeroes vector. For any $m > \sqrt{n}$, recalling $d\le (\log\log n)^2 = o(\log n)$ by assumption, we compute:
\[
    \mathbb E\big[\sum_{i \in [m]} Z_i\big] = m\, \Big(1- \frac{d}{n}\Big)^{n} > \sqrt{n} \, e^{-2d} > 2\,n^{1/4}\,,
\]
and
\begin{align}
    \mathbb E\big[\big(\sum_{i \in [m]} Z_i\big)^2\big] &\le \mathbb E\big[\sum_{i \in [m]} Z_i\big] + m^2\, \Big(1- \frac{d}{n}\Big)^{n}\Big(1- \frac{d}{n-1}\Big)^{n} = \mathbb E\big[\sum_{i \in [m]} Z_i\big] + \mathbb E\big[\sum_{i \in [m]} Z_i\big]^2\Big(1+ O\Big(\frac{d}{n^2}\Big)\Big)^{n} \nonumber\\
    &\le \mathbb E\big[\sum_{i \in [m]} Z_i\big]^2 \big(1 + O\big(n^{-1/4}\big)\big)\,. \label{eq:second moment - empty rows}
\end{align}
In the final inequality, we have used that $(1 + O(d/n^2))^n = (1 + O(d/n)) = (1 + o(n^{-1/4}))$. 
Combining the above estimates and applying Chebyshev inequality yields:
\begin{align*}
    &\mathbb P\Big[\big|\big\{ i\in I:\;a_{ij}=0\mbox{ for all }k_0<j\leq n\big\}\big| > n^{1/4} \Big] \\
    &\quad\quad\ge \min_{m > \sqrt n} \, \Big(1 - \frac{q_0}{n}\Big)\,
    \mathbb P\Big[\big|\big\{ i\in I:\;a_{ij}=0\mbox{ for all }k_0<j\leq n\big\}\big| > n^{1/4} \,\big|\,\calE_{\text{spread}},\,|I| = m\Big] \\
    &\quad\quad\ge \Big(1 - \frac{q_0}{n}\Big)\Big(1 - O\big(n^{-1/4}\big)\Big) = 1 - o(1)\,.
\end{align*}
Thus, with probability $1-o(1)$,
there is are at least $n^{1/4}$ indices (in particular, at least one index) $i\leq n$ with
$|\langle \row_i(A),\sigma\rangle|\geq q_0/2$, whence we also have $\|A\sigma\|_\infty \ge q_0/2$. Taking $c_\lb := \big(8 \log \Cspread\big)^{-1} \ge 1/4 $ and recalling our choice of $q_0$ in \eqref{eq:LB params}, the lower bound \eqref{eq:sparse-LB} is established. 

\end{proof}

\begin{proof}[Proof of Lemma~\ref{lemma:lb spread}]
Recall $\Cspread = 3$. We proceed by induction on $q$.
Let $0\leq q\leq q_0$, where $q_0$ is given in \eqref{eq:LB params}, and consider the following inductive hypothesis, denoted by $\textbf{Hyp}_q$:

\begin{framed}
    $\textbf{Hyp}_q$: with probability at least $1-\frac{q}{\log n}$,
$(A,\sigma)$ contains an $(\ell,r)$--spread
of size at least $s_q$ at time $k_q$, where $\ell\leq 0\leq r$, $r-\ell\geq q$, and
\[
    s_q :=\left\lceil\frac{n}{(\log n)^{\Cspread^{q}}}\right\rceil; \quad k_q :=\sum_{u=1}^{q} \Big\lceil\frac{n}{(\log n)^{e^u}}\Big\rceil.
\]
(The $\calF_{k_q}$--measurable event that such a spread exists will be denoted by $\calE_q$.)
\end{framed}

Note that $\textbf{Hyp}_0$ is trivially satisfied since there are $n$ distinct rows with inner product $0$ at time $0$. In order to establish $\textbf{Hyp}_{q}$, it clearly suffices to show $\PP{\calE_{q}\,|\,\calE_{q-1}} > 1-1/\log n$. We turn towards this goal. For the remainder of the proof, assume that $\textbf{Hyp}_{q-1}$ has been established for some $1 \le q \le q_0$ and condition on any realization of $\calF_{k_{q-1}}$ within $\calE_{q-1}$. For brevity, we will also use the abbreviations
\[
    s:=s_{q-1}, \quad k:=k_{q-1}.
\]
(The parameters $s_q$ and $k_q$ will be denoted by explicit subscripts).  \\

For the remainder of this proof, we will frequently employ the following estimates on $s_q$ and $k_q - k$, which hold uniformly for $q \le q_0$ and all $n$ sufficiently large,
\begin{align}
    n^{.99} &\le n\, e^{-\sqrt{\log n}\, \log\log n}\le s_q \le \frac{n}{\log n} \label{eq:s estimate}\\
    n^{.99} &\le n\, e^{-(\log n)^{.4}\, \log\log n} \le k_q - k \label{eq:kq-k estimate}
\end{align}
Both estimates directly follow from our parameter choices.

By definition of an $(\ell,r)$--spread, $\textbf{Hyp}_{q-1}$ implies the existence of
two disjoint
$s$--subsets $I^{(\ell)}$ and $I^{(r)}$
of $[n]$ such that
    $$
    \langle \row_{i}(A),\sigma\rangle_{[k]}=\ell \quad \forall\,i\in I^{(\ell)}, \quad \text{and} \quad \langle \row_{i}(A),\sigma\rangle_{[k]}=r \quad \forall\, i\in I^{(r)}.
    $$
Let us further extract two technical estimates that enable the remainder of this inductive proof. 
\begin{proposition}\label{prop:lb col intersections}
The following two events each hold with $\calE_{q-1}$--conditional probability of at least $1-\frac{1}{\sqrt{n}}$:
\begin{align*}
\calE_a:=&\big\{
\mbox{For at least $s/2$ indices $i\in I^{(\ell)}$,
$a_{ij}=0$ for all $k<j\leq k_q$}
\big\}\;\cap\\
&\big\{
\mbox{For at least $s/2$ indices $i\in I^{(r)}$,
$a_{ij}=0$ for all $k<j\leq k_q$}
\big\},
\end{align*}
and 
\begin{align*}
\calE_b:=\Big\{
&\text{For at least $\frac{1}{100}\Big(\frac{s}{n}\Big)^2(k_q-k)$ indices $k<j\leq k_q$,}\\
&\mbox{$|\supp(\col_j(A))\cap I^{(\ell)}|=
|\supp(\col_j(A))\cap I^{(r)}|=1$, and}\\
&\mbox{$\big( \supp(\col_j(A)) \cap (I^{(\ell)} \cup I^{(r)})\big)\cap \bigcup\limits_{t\neq j,\,k<t\leq k_q}\supp(\col_t(A)) = \emptyset$}
\Big\}.
\end{align*}
\end{proposition}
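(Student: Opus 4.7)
The plan is to handle $\calE_a$ and $\calE_b$ separately, each via a second-moment (Chebyshev) argument exploiting that the fresh columns $\col_{k+1}(A),\dots,\col_{k_q}(A)$ are i.i.d.\ and independent of $\calF_k$. A crucial background estimate, used throughout, is $d(k_q-k)/n \le 2(\log\log n)^2/(\log n)^e = o(1)$, which follows from $d \le (\log\log n)^2$ together with $k_q-k \le 2n/(\log n)^e$ (a consequence of \eqref{eq:kq-k estimate} and $q \ge 1$). For $\calE_a$, fix $I^{(\ell)}$ and let $Z_i$ be the indicator that $a_{ij}=0$ for all $k<j\le k_q$, for $i\in I^{(\ell)}$. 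Column independence gives $\mathbb{E}[Z_i]=(1-d/n)^{k_q-k}=1-o(1)$, so $\mathbb{E}[\sum_{i\in I^{(\ell)}} Z_i]\ge (1-o(1))s$. A covariance computation analogous to \eqref{eq:second moment - empty rows} (with $k_q-k$ columns in place of $n$) yields $\mathrm{Var}(\sum Z_i) \le \mathbb{E}[\sum Z_i] + O(d^2(k_q-k)/n^2)\cdot \mathbb{E}[\sum Z_i]^2 \le s + o(s^2/n)$, and Chebyshev delivers $\mathbb{P}[\sum Z_i<s/2] = O(1/s) = o(1/\sqrt{n})$ since $s\ge n^{0.99}$. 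A union bound over $I^{(\ell)}$ and $I^{(r)}$ completes $\calE_a$.

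For $\calE_b$, let $G_j$ indicate that column $j \in (k,k_q]$ satisfies all three conditions. I would lower bound $\mathbb{E}[G_j]$ in two stages. The combinatorial probability of the two singleton conditions equals $s^2\binom{n-2s}{d-2}/\binom{n}{d} \ge s^2 d(d-1)/(3n^2)$, using $sd/n=o(1)$ to control the factorial ratio. Conditional on these, with fixed singletons $\{i_\ell^j, i_r^j\}$, the remaining $k_q-k-1$ columns are independent of $\col_j(A)$ and each avoids the pair with probability $\binom{n-2}{d}/\binom{n}{d} \ge 1-O(d/n)$, so the no-intersection condition holds conditionally with probability $\ge 1-O(d(k_q-k)/n) = 1-o(1)$. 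Hence $\mathbb{E}[\sum G_j]\ge (k_q-k)\,s^2 d^2/(10n^2)$, which exceeds the target $(k_q-k)s^2/(100n^2)$ by a factor $\ge 40$ (since $d\ge 2$).

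For the variance, I would bound $\mathbb{E}[G_j G_{j'}]$ for $j\ne j'$ by conditioning on $\col_j(A), \col_{j'}(A)$ and observing that $G_j G_{j'}=1$ forces the four singletons to be pairwise distinct and the remaining $k_q-k-2$ columns to avoid their $4$-point union. Using the elementary identity $\binom{n-4}{d}\binom{n}{d}/\binom{n-2}{d}^2 = 1 + O(d/n^2)$ and compounding over $k_q-k$ products, a short calculation yields $\mathbb{E}[G_j G_{j'}] \le \mathbb{E}[G_j]\mathbb{E}[G_{j'}](1+O(d/n))$. Hence $\mathrm{Var}(\sum G_j) \le \mathbb{E}[\sum G_j] + O(d/n)\cdot\mathbb{E}[\sum G_j]^2$, and Chebyshev gives $\mathbb{P}[\sum G_j < \mathbb{E}[\sum G_j]/2] = O(1/\mathbb{E}[\sum G_j]) + O(d/n) = O(n^{-0.97}) = o(n^{-1/2})$. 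The main obstacle is precisely this covariance step: each $G_j$ depends on \emph{all} columns in $(k,k_q]$ through the no-intersection condition, so the $G_j$'s are not independent. The displayed binomial identity is what controls the compounded multiplicative error to $O(d(k_q-k)/n^2)=o(1)$, making Chebyshev effective.
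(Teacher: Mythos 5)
Your proposal is correct and follows essentially the same route as the paper: for both $\calE_a$ and $\calE_b$ you use the same indicator variables, first- and second-moment computations exploiting the independence of the columns in $(k,k_q]$ from $\calF_k$, the same binomial-ratio identity $\binom{n-4}{d}\binom{n}{d}/\binom{n-2}{d}^2 = 1+O(d/n^2)$ to control the covariance of the $G_j$'s, and Chebyshev's inequality, with failure probabilities $O(1/s)$ and $O(n^{-0.97})$ that comfortably beat the required $1/\sqrt{n}$. The only differences are cosmetic bookkeeping of constants and error terms (e.g.\ your explicit factor-$40$ comparison of the mean with the $\frac{1}{100}(s/n)^2(k_q-k)$ threshold), so no gap to report.
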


\begin{proof}[Proof of Proposition \ref{prop:lb col intersections}]
The proof is based on a standard application of the second moment method.

    We begin by estimating the probability of $\calE_a$. Condition on any realization of the $\calF_{q-1}$--measurable random variable $I$ (where $I$ stands either for $I^{(\ell)}$ or $I^{(r)}$), within the event $\calE_{q-1}$. For each $i \in I$, let $Z_i$ denote the indicator of the event that $a_{ij} = 0$ for all $k<j\le k_q$. Note that $Z_i$ is conditionally independent of $\calF_{q-1}$. Thus, for any $q \ge 1$,
    \[
        \mu := \E{Z_i\,|\,\calF_{q-1},I,\calE_{q-1}} = (1 - d/n)^{k_q-k} \ge (1 - d/n)^{n/\log n} \ge 1 - o(1).
    \]
    We have used in the last inequality that $d = o(\log n)$ by assumption. Similarly, for any distinct  $i,i' \in I$,
    \[
        \E{Z_iZ_{i'}\,|\,\calF_{q-1},I,\calE_{q-1}} = \Big(1 - \frac{d}{n}\Big)^{k_q-k} \Big(1 - \frac{d}{n-1}\Big)^{k_q-k} = \mu^2 \Big(1- O\Big(\frac{d\,(k_q-k)}{n^2}\Big)\Big) = \mu^2(1 + o(1/n)).
    \]
    Combining these estimates, we obtain:
    \begin{align*}
        \mathrm{Var} \Big(\sum_{i \in I} Z_i - s\mu
        \,|\,\calF_{q-1},I,\calE_{q-1}\Big) = s\mu-s\mu^2+ O\Big(\frac{s^2}{n}\Big) = 
        o(s).
    \end{align*}
    Thus, recalling \eqref{eq:s estimate}, Chebyshev's inequality implies
    \begin{equation}\label{eq:Ea-estimate}
        \begin{split}
        \PP{\calE_a^c\,|\,\calF_{q-1},\calE_{q-1}} &\leq  2\,\PP{\sum_{i \in I} Z_i < s/2\,|\,\calF_{q-1},I,\calE_{q-1} }\\ &\le 2\,\PP{\big|\sum_{i \in I} Z_i - s\mu \big| > s/4
        \,|\,\calF_{q-1},I,\calE_{q-1}} 
        \le O\Big(\frac{1}{s}\Big)\,.
        \end{split}
    \end{equation}
    Turning now towards estimating the probability of $\calE_b$, for each $j \in (k,k_q]$, let $Y_j$ be the indicator of the event that $\supp(\col_j(A))$ has intersection of size exactly one with $I^{(\ell)}$ and $I^{(r)}$, denoted by $i^{(\ell)}_j\in I^{(\ell)}$
and $i^{(r)}_j\in I^{(r)}$ respectively, and furthermore that neither $i^{(\ell)}_j$ nor $i^{(r)}_j$ is in the support of the $t$'th column for any $t \in (k,k_q] \setminus \{j\}$. (In other words, $Y_j$ is the event that $j$ contributes to $\calE_b$). We claim:
    \begin{align*}
        \nu := \E{Y_j\,\big|\,\calF_k \cap \calE_{q-1}} &= \frac{\binom{n-2s}{d-2}\binom{s}{1}^2}{ \binom{n}{d}} \cdot \Bigg(\frac{\binom{n-2}{d}}{\binom{n}{d}}\Bigg)^{k_q-k - 1}=(1\pm o(1))\,d(d-1)\Big(\frac{s}{n}\Big)^2\,.
    \end{align*}
    Towards verifying the first identity, begin by recalling that $I^{(\ell)}$, $I^{(r)}$, and $\calE_{q-1}$ are $\calF_{k}$--measurable and $(a_{ij})_{i \in [n], j \in (k,k_q]}$ is independent of $\calF_k$. Next, by definition of $\calE_{q-1}$, we have that $I^{(\ell)}$ and $I^{(r)}$ are disjoint sets, both of size exactly $s$. Thus, the first ratio of binomial coefficients accounts for picking exactly one index from $I^{(\ell)}$ and $I^{(r)}$ each to be in the support of $\col_j(A)$, and the other $d-2$ entries in the column to be from $[n] \setminus (I^{(\ell)} \cap I^{(r)})$. The second term accounts for the conditional probability of 
    \[
        \supp(\,\col_t(A)\,) \cap \big\{i_j^{(\ell)},\, i_j^{(r)}\big\} = \emptyset, \quad \forall\,t \in (k,k_q]\setminus \{j\}.
    \]
    The second identity follows by expanding out the binomial coefficients and then applying \eqref{eq:kq-k estimate} as well as the assumption $d = o(\log n)$. The claim follows. Using \eqref{eq:kq-k estimate} again, for $n$ sufficiently large we also obtain:
    \[
        \E{\sum_{k < j \le k_q} Y_j\,\Big|\,\calF_k \cap \calE_{q-1}} = (k_q-k)\,\nu \ge n^{1-o(1)}\, n^{-.02} > n^{.9}\,.
    \]
    For any $j' \neq j$, similar counting 
    together with \eqref{eq:s estimate} and \eqref{eq:kq-k estimate}
    yield the second moment estimate:
    \begin{align*}
        \E{Y_jY_{j'}\,\big|\,\calF_k \cap \calE_{q-1}} &= \frac{\binom{n-2s}{d-2}\binom{s}{1}^2}{\binom{n}{d}} \cdot \frac{\binom{n-2s}{d-2}\binom{s-1}{1}^2}{\binom{n}{d}} \cdot \Big( \frac{\binom{n-4}{d}}{\binom{n}{d}} \Big)^{k_q-k-2}\\
        &= \nu^2\, \Big(1 + O\Big( \frac{d^2\cdot (k_q-q)}{n^2}+ \frac{1}{s}\Big)\Big) = \nu^2 \, \big(1 + o\big(n^{-.95}\big)\big). 
    \end{align*}
    From the first to the second line, we have used that:
    \[
        \frac{\binom{n-4}{d}}{\binom{n}{d}} = \Big(1 \pm O\Big(\frac{d^2}{n^2}\Big)\Big) \frac{\binom{n-2}{d}^2}{\binom{n}{d}^2}\,.
    \]
    In total, we have:
    \begin{align*}
        \mathrm{Var}
        \Big(\sum_{k < j \le k_q} Y_j\,\Big|\,\calF_k \cap \calE_{q-1}\Big)
        &=(k_q-k)(k_q-k-1)\nu^2 \, \big(1 + o\big(n^{-.95}\big)\big)+(k_q-k)\nu
        -(k_q-k)^2\nu^2\\
        &\le
        \mathbb E\Big[\sum_{k < j \le k_q} Y_j\Big]^2\,o(n^{-.9})\,.
    \end{align*}
    Applying Chebyshev's inequality yields the desired estimate:
    \begin{equation}\label{eq:Eb-estimate}
        \PP{\calE_b^c} \le n^{-.8}
    \end{equation}
    Together, \eqref{eq:Ea-estimate} and \eqref{eq:Eb-estimate} complete the proposition.
\end{proof}

With Proposition \ref{prop:lb col intersections} established, let us complete the proof of $\textbf{Hyp}_q$. 
Condition on any atom of $\calF_{q}$ such that
$\calE_{q-1} \cap \calE_a \cap \calE_b$ holds.
Let $J\subset(k,k_q]$ be the set of all column indices $j$ from the definition
of $\calE_b$, so that $|J|\geq \frac{1}{100}(s/n)^2(k_q-k)$. For every $j\in J$ denote by $i^{(\ell)}_j\in I^{(\ell)}$
and $i^{(r)}_j\in I^{(r)}$ the indices from the intersection
of $\supp\,\col_j(A)$ with $I^{(\ell)}$ and $I^{(r)}$,
respectively.
Crucially, by  definition of $\calE_b$,
the following holds conditionally almost surely within the event $\calE_{q-1} \cap \calE_{a} \cap \calE_b$.
For every $j\in J$, either:
\[
    \langle \row_{i^{(r)}_j}(A),\sigma\rangle_{[k_q]}=r+1, \quad \text{ or } \quad \langle \row_{i^{(\ell)}_j}(A),\sigma\rangle_{[k_q]}=\ell-1.
\]
Therefore, at least one of the following two sets must have size at least $|J|/2$:
\begin{enumerate}
    \item $\big\{ i \in [n]\,:\, \langle \row_{i}(A),\sigma\rangle_{[k_q]}=r+1\}$
    \item $\big\{ i \in [n]\,:\, \langle \row_{i}(A),\sigma\rangle_{[k_q]}=\ell - 1\}$.
\end{enumerate}
In the first case, using the definition of the event $\calE_a$,
we find at least $\min(|J|/2,s/2)$ disjoint pairs of indices
$(\tilde i^{(\ell)}_t,\tilde i^{(r)}_t)$, for $1\leq t \leq \lceil\min(|J|/2,s/2)\rceil$,
with 
\[
    \langle \row_{\tilde i^{(r)}_t}(A),\sigma\rangle_{[k_q]}=r+1,\quad
    \langle \row_{\tilde i^{(\ell)}_r}(A),\sigma\rangle_{[k]}=\ell,\quad
    1\leq r\leq \lceil\min(|J|/2,s/2)\rceil.
\]
Since we have chosen $\Cspread = 3 > e$, it holds for $n$ sufficiently large
\[
    \min\Big(\frac{|J|}{2}, \frac s2\Big)
    =\frac{1}{200}\Big(\frac{s}{n}\Big)^2(k_q-k)
    \geq \frac{1}{200}\cdot
    \frac{n}{(\log n)^{e^{q-1}}}\cdot
    \frac{1}{(\log n)^{2\cdot \Cspread^{q-1}}}
    \geq \left\lceil \frac{n}{(\log n)^{\Cspread^q}}\right\rceil = s_q.
\]
Thus, in the first case, $\calE_{q}$ holds. Similarly, in the second case, there are $2s_q$ distinct indices
$(\tilde i^{(\ell)}_t,\tilde i^{(r)}_t)_{t \in [s_q]}$ with
\[
    \langle \row_{\tilde i^{(r)}_t}(A),\sigma\rangle_{[k_q]}=r,\quad
    \langle \row_{\tilde i^{(\ell)}_r}(A),\sigma\rangle_{[k]}=\ell-1,\quad
    1\leq r\leq s_q,
\]
which also implies $\calE_q$. 
Thus, conditioned on $\calE_{q-1}\cap \calE_a\cap \calE_b$,
the pair $(A,\sigma)$ almost surely contains an $(\ell',r')$--spread
of size at least $s_q$ 
at time $k_q$, for some  $\ell' \leq 0\leq r'$
satisfying $r'-\ell'=r-\ell+1\geq q$. By the tail estimates in \cref{prop:lb col intersections}, we then have the desired inequality:
\[
    \PP{\calE_{q}\,|\,\calE_{q-1}} \ge 1 - \PP{\calE_a^c \cup \calE_b^c\,|\,\calE_{q-1}} \ge 1 - o\Big(\frac{1}{\log n}\Big)\,.
\]
This implies $\textbf{Hyp}_q$, completing the induction and thus the proof of \cref{lemma:lb spread}.
\end{proof}

\subsection{Arbitrary time horizon $T \ge n$}
It remains only to show the reduction of \cref{thm:lb} to \eqref{eq:sparse-LB}.

\begin{proof}[Proof of \cref{thm:lb}]
    We will assume that $T>n$.
    Fix an online algorithm $\sigma$. 
    Let $B$ be the $n\times T$ matrix with columns $X_1,\dots,X_T$ from the statement of the theorem.
    Consider the partition $B = [A_0; A]$,
    where $A_0$ is $n\times (T-n)$ and $A$ is $n\times n$.
    Note that $A_0 \sim \calM_{n,T-n,d}$, $A\sim \calM_{n,n,d}$,
    and $A_0$ and $A$ are independent. Correspondingly, write $\sigma = (\tau_0, \tau)$ where $\tau_0$ is the first $T-n$ coordinates of $\sigma$ and $\tau$ is the remaining final $n$ coordinates of $\sigma$. Our goal is to bound the online discrepancy of $B$ under $\sigma$. Condition on the algebra generated by $\xi$ and $A_0$, which we denote by $\calF_{A_0}$. Define:
    \[
        L_0:=\Big\{i \in [n]\,:\,|(A_0\tau_0)_i| > \frac{1}{2}c_{\lb}\log\log n \Big\}
    \]
    and 
    \[
        L := \big\{i \in [n]\,:\,|(A\tau)_i| > c_{\lb}\log\log n\big\}.
    \]
    As follows from \eqref{eq:sparse-LB}, the event $\{|L| > n^{1/4}\}$ holds with high probability. We henceforth restrict to this event and consider separately the cases that $|L_0| > n^{1/4}$ and $|L_0| \le n^{1/4}$. Within the event $\{|L_0| \le n^{1/4}\}\cap\{|L| > n^{1/4}\}$ it holds almost surely by the pigeonhole principle that $|(B\sigma)_i| > \frac{1}{2}c_{\lb}\log\log n$ for some $i$ (namely, any $i \in L \setminus L_0$). Then the desired lower bound holds conditionally almost surely. 
    Next, condition on the event $\{|L_0| > n^{1/4}\}\cap\{|L| > n^{1/4}\}$.
    In this case, the independence of $A$ from $\calF_{A_0}$ combined with a simple application of Chebyshev's inequality (essentially identical to the computation at the end of the proof of \cref{lemma:lb spread}) yields: with conditional probability tending to one,
    \[
        \big|\big\{i \in L_0\,:\, a_{ij} = 0, \quad  \forall j \in[n]\big\}\big| > \Omega\big(n^{1/4}\big)(1 - d/n)^n.
    \]  
    Note that $(1-d/n)^n \ge e^{-2d/n} \ge n^{o(1)}$, so that the above set has size at least $n^{1/4-o(1)}$. Further, $|(B\sigma)_i| > \frac{1}{2}c_\lb\log\log n$ for any $i$ in the above set. Thus, in total, it holds unconditionally with high probability that $\|(B\sigma)\|_\infty \ge \frac{1}{2}c_\lb\log\log n$. The proof is complete. 
\end{proof}

\section{Upper bound:
estimates on exceptional rows}
The goal of this section is to develop \textit{a priori} bounds controlling the execution of Algorithm \ref{alg}, which will be used in the next section to establish \cref{thm:ub}. The main result of this section is the following:
\begin{lemma}[Few exceptional rows are encountered by the algorithm]\label{lemma:exceptional row bound} Let $\widetilde \sigma$, $\sigma$, $A$, and $(\bfE_t)_{t\in [T]}$ be as defined in Algorithm \ref{alg}. For any fixed constant $C_\excep \ge 5$, there exists some $C_\alg$ sufficiently large (depending only on $C_\excep$) such that for $2\leq d \le (\log \log n)^2 (\log\log\log n)^{-1}$, it holds with respect to the joint randomness of $A$ and $\widetilde{\sigma}$, 
    \[
        \PP{|\bfE_T| \le n\, d^{-C_\excep}} \ge 1 - \log(n)^{-\omega(1)}\,.
    \]
\end{lemma}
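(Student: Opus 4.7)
The plan is a stopping-time bootstrap. Set $M:=\lfloor nd^{-C_\excep}/2\rfloor$ and $\tau^*:=\inf\{t\ge 0:|\bfE_t|\ge M\}$; it suffices to show $\mathbb{P}(\tau^*\le T)\le(\log n)^{-\omega(1)}$, since on $\{\tau^*>T\}$ we have $|\bfE_T|<nd^{-C_\excep}$. Decompose the algorithm's output as $\sigma_s=\widetilde\sigma_s+D_s$, where $D_s\in\{-2,0,2\}$ is nonzero only when the if-branch of Algorithm~\ref{alg} is entered at step $s$ (equivalently $|\supp(\col_s(A))\cap\bfE_{s-1}|=1$). This gives the row-wise decomposition $\langle\row_i(A),\sigma\rangle_{[t]}=F_i(t)+B_i(t)$, with $F_i(t):=\langle\row_i(A),\widetilde\sigma\rangle_{[t]}$ a ``free'' Rademacher random walk and $B_i(t):=\sum_{s\le t}a_{is}D_s$ a cumulative correction whose magnitude is at most twice the number of corrective steps touching row $i$. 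By the triangle inequality, each $i\in\bfE_{T\wedge\tau^*}$ is either a \emph{random-walk row} ($\max_{t\le T}|F_i(t)|\ge C_\alg\log\log n/2$) or a \emph{bias row} ($\max_{t\le T\wedge\tau^*}|B_i(t)|\ge C_\alg\log\log n/2$); writing $R,B$ for the two sets, $|\bfE_{T\wedge\tau^*}|\le|R|+|B|$.

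For random-walk rows: conditional on $A$, $F_i$ is a simple random walk of length $N_i:=|\supp(\row_i(A))|$, so Hoeffding's inequality plus a standard maximal estimate gives a per-row tail $\le 2\exp(-C_\alg^2(\log\log n)^2/(8N_i))$. Combining with Chernoff bounds on $N_i\sim\mathrm{Bin}(n,d/n)$ and the hypothesis $d\log\log\log n\le(\log\log n)^2$, for $C_\alg$ chosen with $C_\alg^2\ge 160 C_\excep$ one obtains $\mathbb{E}[|R|\mid A]\le nd^{-C_\excep}/8$ on a typical-$A$ event of probability $1-(\log n)^{-\omega(1)}$. For concentration of $|R|$ about this mean, I plan to invoke McDiarmid's bounded differences on the Rademacher seed $\widetilde\sigma$ (independent of $A$): each $\widetilde\sigma_s$ affects only the $d$ walks $F_i$ with $a_{is}=1$, so the bounded-differences constants equal $d$, yielding $\mathbb{P}(|R|\ge nd^{-C_\excep}/4\mid A)\le\exp(-\Omega(nd^{-2C_\excep-2}))=(\log n)^{-\omega(1)}$.

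For bias rows: on $\{s\le\tau^*\}$ we have $|\bfE_{s-1}|<M$, so for any $i\notin\bfE_{s-1}$ the $\calF_{s-1}$-conditional probability that a corrective event affects row $i$ at step $s$ is at most $(d/n)\cdot(d-1)M/(n-1)=O(d^{2-C_\excep}/n)$. Hence the number $K_i^*$ of such events for $s\le T\wedge\tau^*$ is stochastically dominated by $\mathrm{Bin}(T,O(d^{2-C_\excep}/n))$, with mean $O(d^{2-C_\excep})\le O(d^{-3})$ (using $C_\excep\ge 5$). A bias row must have $K_i^*\ge C_\alg\log\log n/4$, and the standard binomial tail $(e\lambda/k)^k$ at this threshold yields per-row probability $(O(1)/\log\log n)^{C_\alg\log\log n/4}=(\log n)^{-\Omega(C_\alg\log\log\log n)}=(\log n)^{-\omega(1)}$. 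Markov then bounds $\mathbb{P}(|B|>nd^{-C_\excep}/4)\le 4d^{C_\excep}(\log n)^{-\omega(1)}=(\log n)^{-\omega(1)}$, since $d^{C_\excep}\le(\log\log n)^{2C_\excep}=(\log n)^{o(1)}$. Combining, with probability $1-(\log n)^{-\omega(1)}$ we get $|\bfE_{T\wedge\tau^*}|\le|R|+|B|<M$, contradicting $|\bfE_{\tau^*}|\ge M$ on $\{\tau^*\le T\}$. The main obstacle I expect is the concentration for $R$: in the boundary regime $d\sim(\log\log n)^2/\log\log\log n$ the per-row Hoeffding tail is only $(\log\log n)^{-O(C_\alg^2)}$, and Markov alone is inadequate; the structural fact that each seed coordinate $\widetilde\sigma_s$ influences only $d$ rows (combined with $d^2\ll n$) is exactly what drives McDiarmid to the required $(\log n)^{-\omega(1)}$ level.
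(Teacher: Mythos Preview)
Your approach is essentially correct and is \emph{genuinely different} from the paper's. The paper introduces a hierarchy of ``categories'' $\bfC_{t,w}$: category~$0$ is your set $R$ of random-walk rows, and category~$w$ consists of rows that are linked via two distinct columns to category~$w-1$ rows. The paper then (i)~bounds each $|\bfC_{T,w}|$ by an inductive moment computation (first moment for small sizes, high moments for larger sizes), and (ii)~uses a pigeonhole argument over the at most $O(\log\log n)$ nonempty categories to show $\bfE_T\subset\bigcup_w\bfC_{T,w}$. Your stopping-time plus random-walk/bias decomposition sidesteps the entire category machinery: the stopping time lets you assume the target bound $|\bfE_{s-1}|<M$ uniformly, after which the ``bias'' rows are handled by a single binomial tail rather than a recursive bootstrap. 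This is more direct and arguably more elementary; the paper's route gives finer structural information about how exceptional rows propagate, but that structure is not reused elsewhere.

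Two small points to tighten. First, in the bias-row step you should make explicit that the relevant time is the \emph{first entry time} $t_i$ of row $i$ into $\bfE$: it is only at $t_i$ that $|B_i(t_i)|\ge C_\alg\log\log n/2$ is guaranteed while all prior corrections satisfy $i\notin\bfE_{s-1}$, which is what your stochastic-domination bound uses. As written, your definition of a bias row via $\max_{t\le T\wedge\tau^*}|B_i(t)|$ allows $t>t_i$, where corrections with $i\in\bfE_{s-1}$ can occur at rate $\Theta(d/n)$ rather than $O(d^{2-C_\excep}/n)$. Second, the ``typical-$A$ event of probability $1-(\log n)^{-\omega(1)}$'' does not follow from per-row Chernoff on $N_i$ alone: for small $d$ (say $d=2$) the event $\{\max_i N_i\le Cd\}$ fails with probability $\Theta(1)$. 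The cleanest fix is to apply McDiarmid directly to $\mathbb{E}[|R|\mid A]$ as a function of the i.i.d.\ columns of $A$ (each column has bounded difference $\le 2d$, yielding the same $\exp(-\Omega(nd^{-2C_\excep-2}))$ tail), or equivalently apply McDiarmid once to $|R|$ over the joint randomness $(A,\widetilde\sigma)$ and bound the unconditional mean $\mathbb{E}[|R|]$ by averaging the Hoeffding tail over $N_i$.
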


Without any attempt to optimize, it suffices for this article to select the following parameter choices:
\[
    C_{\excep} = 5, \quad C_{\alg} = 28\,.
\]
Towards proving the lemma, we introduce the following ``categories'' of exceptional rows, which we claim will with high probability form a decomposition of the exceptional sets.

\begin{definition}[``Categories'' of exceptional rows]
    For each $t \in [T]$, define the sets $\bfC_{t,w} \subset [n]$ inductively via:
    \[
        \bfC_{t,0} = \Big\{i \in [n]\,:\, \big|\big\la \mathrm{row}_i(A),\, \widetilde \sigma \big\rangle_{[s]}\big| > \frac{1}{2}\cdot C_\alg \, \log \log n, ~ \text{ for some $s \le t$} \Big\},
    \]
    and, for each $w \in \mathbb N$,
    \begin{align*}
        \bfC_{t,w} = \Big\{i \in [n]\,:\, &\exists\, \text{ distinct } j_1,j_2 \in \mathrm{supp}(\mathrm{row}_i(A))\cap[t] \, \text{ s.t. for both $u \in \{1,2\}$, } \\
        &\exists \, i_u \in \big(\bfC_{j_u - 1,w-1} \cap \,\supp(\mathrm{col}_{j_u}(A))\big)\setminus\{i\}\Big\}.
    \end{align*}
\end{definition}

\begin{remark}\label{rem:nested}
By construction,
everywhere on the probability space,
$\bfC_{t,w}\subset \bfC_{t',w}$ for all $w\geq 0$
and $t\leq t'$.
\end{remark}

We begin by estimating the sizes of the categories, $\bfC_{t,w}$. 

\begin{proposition}\label{prop:category tails}
    We have: 
    \begin{equation}\label{eq:C0 estimate}
        \PP{|\bfC_{T,0}| > n\, d^{-2C_{\excep}}\,} \le n^{-.9}\,,
    \end{equation}
    and, letting $\tau := \log_{\frac{3}{2}}\,\log_{e} \,d^{2C_\excep}$,
    \begin{equation}\label{eq:Cw estimate}
        \PP{\forall w \ge 0\,:\,|\bfC_{T,w}| \le n\,  \exp\Big(-\Big(\frac{3}{2}\Big)^{w+\tau}\Big)} = 1- \log(n)^{-\omega(1)}\,.
    \end{equation}
\end{proposition}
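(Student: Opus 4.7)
I would prove the proposition by induction on $w$, with part~(1) serving as the base case.

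For part~(1), the key observation is that, since $\widetilde\sigma$ is independent of $A$ with i.i.d.\ Rademacher entries, the prefix sum rewrites as
\[
\langle \row_i(A),\widetilde\sigma\rangle_{[s]} \;=\; \sum_{j\le s} a_{ij}\,\widetilde\sigma_j,
\]
which is a sum of jointly independent, mean--zero increments, bounded by $1$ in absolute value, each of variance $d/n$ (using that distinct columns of $A$ are independent and $a_{ij}$ and $\widetilde\sigma_j$ are independent). Bernstein's inequality combined with Levy's maximal inequality (applicable since $\widetilde\sigma_j$ is symmetric) therefore yields
\[
\mathbb P\bigl(i\in \bfC_{T,0}\bigr)\le L^{-\Omega(C_\alg^2)},\qquad L := \log\log n,
\]
valid in the regime $d\le L^2/\log L$. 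For $C_\alg$ large enough relative to $C_\excep$ (the choices $C_\alg = 28$, $C_\excep = 5$ suffice), this makes $\mathbb E[|\bfC_{T,0}|]$ a $\mathrm{poly}(\log n)$ factor smaller than $n d^{-2C_\excep}$. To promote this expectation bound to a high--probability statement, I would apply McDiarmid's bounded differences inequality to $|\bfC_{T,0}|$ viewed as a function of the $2n$ independent coordinates $\bigl(\widetilde\sigma_t,\col_t(A)\bigr)_{t\le T}$: altering any one coordinate changes at most $2d$ of the indicators $\mathbf{1}\{i\in\bfC_{T,0}\}$, giving a tail of order $\exp\bigl(-\Omega(n\,d^{-O(1)})\bigr)$, which in the specified parameter range is overwhelmingly smaller than $n^{-0.9}$.

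For the inductive step in part~(2), note that $p_0 = e^{-(3/2)^\tau} = d^{-2C_\excep}$ by the choice of $\tau$, so the base case is exactly part~(1). Conditioning on $\{|\bfC_{T,w-1}|\le np_{w-1}\}$, a row $i$ lies in $\bfC_{T,w}$ only if there exists a ``witness'' $5$--tuple $(i,i_1,i_2,j_1,j_2)$ with $j_1\neq j_2$, indicators $a_{ij_1}=a_{i_1 j_1}=a_{ij_2}=a_{i_2 j_2}=1$, and $i_u\in\bfC_{j_u-1,w-1}$ for both $u\in\{1,2\}$. Upper bounding $\bfC_{j_u-1,w-1}$ by $\bfC_{T,w-1}$ and using the independence of distinct columns (so that $\mathbb P(a_{i_1 j}a_{i_2 j}=1) = d(d-1)/(n(n-1)) \le d^2/n^2$), the conditional expected number of witness tuples is at most
\[
n\cdot(np_{w-1})^2\cdot n^2\cdot(d/n)^4 \;=\; n\,d^{4}\,p_{w-1}^2.
\]
Since $p_w = p_{w-1}^{3/2}$ and $p_{w-1}\le p_0 = d^{-2C_\excep}$, this expectation equals $np_w\cdot d^{4}\,p_{w-1}^{1/2}\le np_w/d$ for $C_\excep = 5$.

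The main obstacle is the concentration at each inductive level: a naive Markov application only yields a conditional failure probability of order $d^{-1}$, which when union--bounded over the $O(\log\log n)$ nontrivial levels (past which $p_w<1/n$ forces $\bfC_{T,w}$ empty almost surely) falls short of the target $(\log n)^{-\omega(1)}$. To close this gap I would compute higher moments of the witness--tuple count $Y$. Because distinct columns of $A$ are independent, the $k$--th moment decomposes as a sum over ``column multigraphs'' on the $2k$ column slots involved; tuples in which two or more columns coincide contribute only lower--order corrections, producing a bound of the shape $\mathbb E[Y^k]\le (C\,\mathbb E[Y])^k\cdot k!$ for an absolute constant $C$. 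Applying Markov to $Y^k$ with $k\asymp\sqrt{\log n}$ then yields tails decaying faster than any inverse polynomial in $\log n$, and a union bound over $w$ completes the proof of part~(2).
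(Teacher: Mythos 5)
Your treatment of \eqref{eq:C0 estimate} is fine: the single--row estimate (Bernstein plus L\'evy's maximal inequality) matches the paper's maximal--Bernstein bound, and replacing the paper's conditional second--moment/Chebyshev step by McDiarmid's inequality in the $2n$ independent coordinates $(\widetilde\sigma_t,\col_t(A))_{t\le T}$, with bounded differences $2d$, is a legitimate and in fact stronger route to the $n^{-.9}$ tail. (The slack between $\mathbb{E}|\bfC_{T,0}|$ and $n d^{-2C_\excep}$ is only a $\mathrm{poly}(\log\log n)$ factor when $d$ is bounded, not $\mathrm{poly}(\log n)$, but McDiarmid does not need that overclaim.)

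The inductive step for \eqref{eq:Cw estimate}, however, has a genuine gap in the concentration step. With your own numbers, $\mathbb{E}[Y]\le np_w/d$, so the slack between the mean of the witness count and the threshold $np_w$ is only a factor $d$, which can be as small as $2$. Under the moment bound you claim, $\mathbb{E}[Y^k]\le (C\,\mathbb{E}[Y])^k\,k!$, Markov at level $np_w$ gives $(C/d)^k\,k!$, which for $k\asymp\sqrt{\log n}$ does not decay at all (the $k!$ beats $(C/d)^k$ since $d\le(\log\log n)^2$); optimizing over $k$ it is never smaller than $e^{-O(d)}$, a constant for bounded $d$, so this step does not even improve on the naive $1/d$ per--level bound you correctly identified as insufficient. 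What is needed, and what the paper's proof supplies, is Poisson--type moment growth of order $(\mathrm{mean})^k$ with no factorial factor, which can only be extracted when the mean is large compared with $k$ and the collision terms are suppressed; concretely, the paper splits on $m=np_{w-1}$: for $m\le n^{1/2}$ the first moment alone beats the target $n(m/n)^{3/2}$ by a factor $n^{\Omega(1)}$, and for $m\ge n^{1/2}$ it takes $r_0=(\log\log n)^2$ moments of the row count, shows they are of order $(\mathrm{mean})^{r_0}$ (row and column coincidences contribute factors such as $r_0^{r_0-r}$ and $|D|^{2r-|D|}$ that are dominated because the mean is polynomially large in $n$), and compares against twice the mean to get $2^{-r_0}=(\log n)^{-\omega(1)}$. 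A secondary flaw: you condition on the global event $\{|\bfC_{T,w-1}|\le np_{w-1}\}$ and enlarge $\bfC_{j_u-1,w-1}$ to $\bfC_{T,w-1}$ before invoking independence of the columns $j_1,j_2$; both the conditioning event and the enlarged set depend on those very columns, so the product bound is not justified as written. The paper instead keeps the $\calF_{j_u-1}$--measurable sets $\bfC_{j_u-1,w-1}$ together with the cardinality bound at time $j_u-1$, intersects with the good event rather than conditioning on it, and peels off the fresh columns sequentially; your argument needs the same repair.
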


Let us begin by observing that \cref{lemma:exceptional row bound} follows readily  upon assuming \cref{prop:category tails}.
\begin{proof}[Proof of \cref{lemma:exceptional row bound}]
    We have from \eqref{eq:Cw estimate}: with probability at least $1 - \log(n)^{-\omega(1)}$, 
    \begin{equation}\label{eq:C size estimates}
        \Big|\bigcup_{w \ge 0} \bfC_{T,w}\Big| \le \sum_{w \ge 0} n\,e^{-(\frac{3}{2})^{w+\tau}}
        \le O\Big(n\, e^{-(\frac{3}{2})^{\tau}}\Big) \le n\,d^{-C_\excep}\,.
    \end{equation}
    Moreover, as $6 > 1/\log(3/2)$,  \eqref{eq:Cw estimate} also yields
    \[
        \PP{|\bfC_{T,w}| = 0,\, \forall \, w \ge 6\log\log n}   
        \ge 1 - \log(n)^{-\omega(1)}.
    \]
    We claim that restricted to the event $\calE := \{|\bfC_{T,w}| = 0,\, \forall \, w \ge 6\log\log n\}$, almost surely $\bfE_t \subset \bigcup_{w} \bfC_{t,w}$ for all $t \in [T]$. If this claim is established, then \eqref{eq:C size estimates} implies the desired upper tail on $|\bfE_{T}|$, completing \cref{lemma:exceptional row bound}. Assume for induction that the claim holds for some $t-1 \ge 0$. (The base case of $t-1 = 0$ is trivial, since $E_{t-1} = \emptyset$). Restrict to $\calE$ and let $i \in \bfE_t$; our goal is to show $i \in \bfC_{t,w}$ for some $w$. By the induction hypothesis, it suffices to assume $i \in \bfE_t \setminus \bfE_{t-1}$. Further, we are done if $i \in \bfC_{t,0}$, so assume otherwise. Then
    $$
    \big|\big\la \mathrm{row}_i(A),\, \widetilde \sigma \big\rangle_{[s]}\big| \leq \frac{1}{2}\cdot C_\alg \, \log \log n, ~ \text{ for all $s \le t$}.
    $$
    Since $i\notin \bfE_{t-1}$, we have
    $$
    |\la \mathrm{row}_i(A),\, \sigma \ra_{[s]}| < C _\alg \log\log n
    \quad \mbox{for all }s\leq t-1.
    $$
    On the other hand, $i\in \bfE_{t}$ by assumption, whence
    $$
    |\la \mathrm{row}_i(A),\, \sigma \ra_{[t]}| \geq C_\alg \log\log n.
    $$
    Combining the above assertions, we obtain
    \[
        \big|\big\la \mathrm{row}_i(A),\, \widetilde \sigma-\sigma \big\rangle_{[t]}\big| \ge \big|\big\la \mathrm{row}_i(A),\, \sigma \big\rangle_{[t]}\big| - \big|\big\la \mathrm{row}_i(A),\, \widetilde \sigma \big\rangle_{[t]}\big| \ge \frac{1}{2}\,C_\alg \log\log n.
    \]
    In particular, assuming $C_\alg \geq 28$,
    \begin{equation}\label{eq:oajbfsaljhfbsljfk}
        |\{j \in \supp(\row_i(A)) \cap [t] \,:\, \sigma_j \neq \widetilde{\sigma}_j\}| \geq \frac{C_\alg}{4}\, \log\log n \geq 7 \log\log n.
    \end{equation}
    By construction of \cref{alg}, for any $j \in [T]$,
    a necessary condition for $\sigma_j \neq \widetilde{\sigma}_j$ to hold
    is that \[\bfE_{j-1} \cap \supp(\col_j(A))\neq\emptyset.\]
    Then, by \eqref{eq:oajbfsaljhfbsljfk} and the assumption $i\notin\bfE_{t-1}$,
    we have  
    \[
        \big|\Xi_t\big| \ge 7 \log\log n, \quad \text{ where } \quad \Xi_t := \big\{j \in \supp(\row_i(A)) \cap [t]\,:\, \bfE_{j-1} \cap \big(\supp(\col_j(A)) \setminus \{i\}\big) \neq \emptyset \big\}.
    \]
    For each $j \in \Xi_t$, let $i_j$ be an arbitrary element of $\bfE_{j-1} \cap \big(\supp(\col_j(A)) \setminus \{i\}\big)$. Since $i_j \in \bfE_{j-1}$ and $j-1 <  t$, our inductive hypothesis implies that $i_j$ belongs to $\bfC_{j-1,w_j}$ for some $w_j$. But, within the event $\calE$, almost surely there are at most $\lceil 6\log\log n\rceil$ non--empty categories $\bfC_{T,w}$. In particular, recalling Remark \ref{rem:nested}, 
    \[
        |\{w\,:\, \bfC_{s,w} \neq \emptyset, \text{ for some } s \in [T]\}| \le \lceil 6\log\log n\rceil .
    \]
    Thus, recalling \eqref{eq:oajbfsaljhfbsljfk}, the pigeonhole principle implies that there exists some $w$ and at least two distinct indices $j_1,j_2 \in \Xi_t$ with $w=w_{j_1} = w_{j_2}$, and
    $$
    i_{j_u}\in
    \bfE_{j_u-1} \cap \big(\supp(\col_{j_u}(A)) \setminus \{i\}\big)
    \cap \bfC_{j_u-1,w},\quad u=1,2.
    $$
    It follows by definition that $i \in \bfC_{t,w+1}$. The induction is established, finishing the proof.
\end{proof}
\begin{remark}
Note the choice of the index $i_*$
in \cref{alg} is not relevant for the above argument. We only needed the fact that $\sigma_j \neq \widetilde{\sigma}_j$
implies $\bfE_{j-1} \cap \supp(\col_j(A))\neq\emptyset$.
\end{remark}

The remainder of this section is devoted to the proof of \cref{prop:category tails}, which we now give. 

\begin{proof}[Proof of Proposition \ref{prop:category tails}]
    For the remainder of this proof, it will be convenient to use the shorthand:
    \[
        S_i := \mathrm{supp}(\mathrm{row}_i(A)).
    \]
    We begin by establishing the estimate on $|\bfC_{0,w}|$ provided by \eqref{eq:C0 estimate}. \\
    
    \textbf{Estimate for category zero}: using the maximal version of Bernstein's inequality \cite{maximal-bernstein} and the independence of the columns of $A$, it holds for any $i \in [n]$ and any $y > 0$:
    \[
        \PP{\max_{t \in [T]}|\la \mathrm{row}_i(A),\, \widetilde\sigma \ra_{[t]}| > y} \le 2\, \exp\Big( - \frac{y^2}{2\,d\,\big(1 - \frac{d}{n}\big) + 2y}\Big)\,.
    \]
    In particular, for any $i \in [n]$, taking $C_\alg \ge 4\,\sqrt{C_{\excep}}$ (satisfied by our choices $C_\excep = 5$, $C_\alg = 28$), we have
    \begin{equation}\label{eq:mu0}
        \mu_0 := \PP{i \in \bfC_{T,0}} \le \exp\Big(-\frac{1}{9}\,\,C_{\alg}^2\,\log \log \log  n\Big) < d^{-\frac{1}{5}\,C_\alg^2} < d^{-3\,C_\excep}\,.
    \end{equation}
    Next, consider the variance of $|\bfC_{T,0}|$. Letting $i,i' \in [n]$ be distinct, it is easily computed that $\E{|S_i\cap S_{i'}|} = \Theta(n(d/n)^2)$. Applying Markov's inequality then yields
    \begin{align*}
        \PP{\{i,i'\} \subset \bfC_{T,0}} &\le O\Big(\frac{d^2}{n}\Big) + \PP{\{i,i'\} \subset \bfC_{T,0}, ~S_i \cap S_{i'} = \emptyset }.
    \end{align*}
    Next, note that conditioned on the $S_i$, $S_{i'}$ and the event $\{S_i \cap S_{i'} = \emptyset\}$---all three of which are measurable with respect to the algebra generated by $A$---the variables $(A\widetilde\sigma)_i$ and $(A\widetilde\sigma)_{i'}$ are conditionally independent functions of disjoint subsets of the coordinates of $\widetilde{\sigma}$ (and $\widetilde\sigma$ is independent of $A$). Thus, 
    \begin{align*}
        \PP{\{i,i'\} \subset \bfC_{T,0}, ~S_i \cap S_{i'} = \emptyset } 
        &= \sum_{s,s'}\PP{\{i,i'\} \subset \bfC_{T,0}, \,S_i \cap S_{i'} = \emptyset, \,S_i = s,\, S_{i'} = s' } \\
        &\le \sum_{s,s'}\PP{i \in \bfC_{T,0}, \,S_{i} = s}\PP{i' \in \bfC_{T,0}, \, S_{i'} = s'} \\
        &= \PP{i \in \bfC_{T,0}}^2\,.
    \end{align*}
    In summary, we have $\E{|\bfC_{T,0}|^2} \le \E{|\bfC_{T,0}|}^2 + \E{|\bfC_{T,0}|} + O(n\,d^2)$. Chebychev's inequality and \eqref{eq:mu0} then yield the desired tail \eqref{eq:C0 estimate} on the number of ``category zero'' exceptional rows:
    \[
        \PP{|\bfC_{T,0}| > n \,d^{-2\,C_{\excep}}\,} \le \frac{\mathrm{Var}(|\bfC_{T,0}|)}{n^2 d^{-4\,C_{\excep}} } \le n^{-.9}\,.
    \]

    \vspace{1em}
    
    \textbf{Estimate for remaining categories}: next, we turn towards developing \eqref{eq:Cw estimate}. Let $m$ be some integer parameter chosen later (corresponding to our estimate on $|\bfC_{T,w-1}|$), assumed to satisfy $m \le 2nd^{-2C_\excep}$. Note that for any $j \in [T]$, the event $\{|\bfC_{T,w-1}| \le m\}$ is contained within the event $\{|\bfC_{j,w-1}| \le m\}$. Let us decompose the event $\{i \in |\bfC_{T,w}|\} \cap \{|\bfC_{T,w-1}| \le m\}$ so as to exploit the independence of columns in $A$. Define the events:
    \begin{align*}
        \widetilde{\calE}_m &:= \{|\bfC_{T,w-1}| \le m\} \\
        \mathcal{E}_{j}(i) &:= \{i \in \supp(\mathrm{col}_{j}(A)),\, \bfC_{j - 1,w-1} \cap \,\supp(\mathrm{col}_{j}(A)) \setminus \{i\} \neq \emptyset,~|\bfC_{j-1,w-1}| \le m\}
    \end{align*}
    In this notation, we have:
    \begin{equation}\label{eq:Cw event containment}
        \{i \in \bfC_{T,w}\} \cap \widetilde{\calE}_m \subset \bigcup_{1 \le j < j' \le T} \mathcal{E}_{j}(i) \cap \mathcal{E}_{j'}(i)\,.
    \end{equation}
    Recalling the filtration $\calF_j$ from Definition \ref{def:online}, note that $\calE_{j}(i)$ is $\calF_j$ measurable. Furthermore, $|\bfC_{j-1,w-1}|$ is $\calF_{j-1}$--measurable and $\mathrm{supp}(\col_{j}(A))$ is independent of $\calF_{j-1}$. Using this independence as well as the assumed upper bound on $m$ (which, in particular, implies $md/n = o(1)$), it holds for $n$ sufficiently large
    \begin{align*}
        \PP{\widetilde{\calE}_m \cap \calE_{j}(i)\,\big|\,\calF_{j-1}} &\le \PP{|\bfC_{j-1,w-1}| \le m,\, i \in \supp(\mathrm{col}_{j}(A)),\, \bfC_{j - 1,w-1} \cap \,\supp(\mathrm{col}_{j}(A)) \setminus \{i\} \neq \emptyset \,\big|\,\calF_{j-1} } \nonumber\\
        &\le \frac{d}{n}\, \Big(1 - \Big(1 - \frac{m}{n}\Big)^{d-1}\Big) \nonumber\\
        &\le C\,m\,\Big(\frac{d}{n}\Big)^2\,,
    \end{align*}
    where $C > 1$ is a universal constant. 
    Thus,
    \begin{align}
        \PP{\widetilde{\calE}_m \cap \calE_{j}(i) \cap \calE_{j'}(i)} &\le  C\, m\,\Big(\frac{d}{n}\Big)^2 \cdot \PP{\widetilde{\calE}_m \cap \calE_{j'}(i)\,\big|\, \calE_{j}(i) \cap \calF_{j'-1}} \le \Big(C\, m\,\Big(\frac{d}{n}\Big)^2\Big)^2  \,. \label{eq:Ej estimate}
    \end{align}
    Relation \eqref{eq:Ej estimate}, combined with a union bound over $j$ and $j'$, yields for every $i$:
    $$
    \PP{\{i \in \bfC_{T,w}\} \cap \widetilde{\calE}_m}
    \leq |T|^2\,\Big(C\, m\,\Big(\frac{d}{n}\Big)^2\Big)^2  \,.
    $$
    Using Markov's inequality provides the desired tail on $\bfC_{T,w}$ in the case that $m$ is small. 
    The last relation
    yields: 
    \begin{align}
        \forall\, m \le n^{1/2}, \quad  \PP{\widetilde{\calE}_m, \, |\bfC_{T,w}|>n\,\Big(\frac{m}{n}\Big)^{3/2}} &\le 
        |T|^2\,n\,\Big(C\, m\,\Big(\frac{d}{n}\Big)^2\Big)^2
        \bigg(n\,\Big(\frac{m}{n}\Big)^{3/2}\bigg)^{-1}
        = \log(n)^{-\omega(1)}\,.\label{eq:small m - Cw tail}
    \end{align}

    Towards treating the case of large values of $m\ge n^{1/2}$, we use Markov's inequality applied to some large power of $|\bfC_{T,w}|$
    (i.e apply high moment methods as opposed to the first moment for small $m$). Fix distinct indices $i_1,\dots,i_r$ from $[n]$ (with $r$ chosen later).
    We are interested in estimating the probability 
    of the event
    $$
    \{i_1,i_2,\dots,i_r \in \bfC_{T,w}\} \cap \widetilde{\calE}_m.
    $$
    Similarly to \eqref{eq:Cw event containment}, we have
    \begin{equation}\label{eq:highmommentCTw}
    \PP{\{i_1,i_2,\dots,i_r \in \bfC_{T,w}\} \cap \widetilde{\calE}_m}
    \leq \sum\limits_{j_\ell < j_\ell',\,\ell \in [r]}
    \PP{\widetilde{\calE}_m \cap \bigcap_{\ell \in [r]} \calE_{j_\ell}(i_\ell)\cap \calE_{j_\ell'}(i_\ell) }.
    \end{equation}
    First, consider distinct indices $j_1,j_1',\dots,j_r,j_r'$ from $[T]$ with $j_1 < j_1' < j_2 < j_2' < \dots < j_r < j_r'$.
    Computing similarly to \eqref{eq:Ej estimate}, by sequentially conditioning on the the filtration $\calF_{j}$ at the increasing sequence of times $j$ given by $j_1,j_1',j_2,j_2',\dots$, we obtain:
    \begin{equation}\label{eq:Cw mean}
        \PP{\widetilde{\calE}_m \cap \bigcap_{\ell \in [r]} \calE_{j_\ell}(i_\ell)\cap \calE_{j_\ell'}(i_\ell) } \le \Big(C\,m\,\Big(\frac{d}{n}\Big)^2\Big)^{2r}\,.
    \end{equation}
    Observe that the above estimate is actually true for all distinct $j_1,j_1',\dots,j_r,j_r'$ satisfying $j_\ell < j_\ell'$ for $\ell \in [r]$.

    Next, we extend \eqref{eq:Cw mean} to the case where the indices $j_1,j_1',\dots,j_r,j_r'$ are not necessarily distinct.
    Assume that $j_\ell < j_\ell'$ for $\ell \in [r]$, 
    and let $D := \bigcup\limits_{\ell\in[r]}\{j_\ell,j_\ell'\}$.
    Then, a simple modification of the above yields
    \begin{equation}\label{eq:Cw moments}
        \PP{\widetilde{\calE}_m \cap \bigcap_{\ell \in [r]} \calE_{j_\ell}(i_\ell)\cap \calE_{j_\ell'}(i_\ell) } \le \Big(C\,\frac{d}{n}\Big)^{2r}\, \Big(m\,\frac{d}{n}\Big)^{|D|} \,,
    \end{equation}
    where the factor of $(d/n)^{2r}$ accounts for each row $i_1,\dots,i_r$ having support in two prescribed columns (distinct for each row, since $j_\ell < j_\ell'$ for all $\ell$), and the factor of $(md/n)^{|D|}$ arises in estimating the probability that some row in $\bfC_{j-1,w-1}$ has support in each distinct column $j \in D$.
    In view of \eqref{eq:highmommentCTw} and \eqref{eq:Cw moments},
    \begin{align}\label{eq:CTw indicesi1ir}
    \PP{\{i_1,i_2,\dots,i_r \in \bfC_{T,w}\} \cap \widetilde{\calE}_m}
    \leq\sum_{|D| \le 2r}n^{|D|}\,|D|^{2r-|D|}\Big(C\,\frac{d}{n}\Big)^{2r}\, \Big(m\,\frac{d}{n}\Big)^{|D|},
    \end{align}
    where the factor of $n^D|D|^{2r-|D|}$ crudely bounds the number of ways to select the $j$'s, such that there are $|D|$ total distinct column indices.

    Returning to estimating the size of $\bfC_{T,w}$,
    letting $r_0 = (\log\log n)^2$ , we have:
    \begin{align*}
        \PP{|\bfC_{T,w}| >  2\,C^2\,d^4\,\Big(\frac{m}{n}\Big)^2\,n} &\le \PP{\widetilde{\calE}_m^c} + \PP{|\bfC_{T,w}| >  2\,C^2\,d^4\,\Big(\frac{m}{n}\Big)^2\,n,\, \widetilde{\calE}_m} \\
        &\le \PP{\widetilde{\calE}_m^c} + \PP{|\bfC_{T,w}|^{r_0} >  2^{r_0} \Big(\,C^2\,d^4\,\Big(\frac{m}{n}\Big)^2\,n\Big)^{r_0},\, \widetilde{\calE}_m}.
    \end{align*}
    This last probability term will be bounded using Markov's inequality. The expectation of $|\bfC_{T,w}|^{r_0}\ind(\widetilde{\calE}_m)$ can be evaluated by applying \eqref{eq:CTw indicesi1ir}, then taking a union bound over all choices of distinct indices $\{i_1,\dots,i_r\} \subset [n]$ for each $r \le r_0$.  
    In total:
    \begin{align*}
        \forall m \ge n^{1/2}, \quad &\PP{|\bfC_{T,w}|^{r_0} >  2^{r_0} \Big(\,C^2\,d^4\,\Big(\frac{m}{n}\Big)^2\,n\Big)^{r_0},\, \widetilde{\calE}_m} \\
        &\le\frac{1}{2^{r_0} \Big(C^2\,d^4\,\Big(\frac{m}{n}\Big)^2\,n\Big)^{r_0}} \sum_{r \le r_0}r_0^{r_0-r}n^r\sum_{|D| \le 2r}n^{|D|}\,|D|^{2r-|D|}\,  \Big(C\frac{d}{n}\Big)^{2r}\, \Big(m\,\frac{d}{n}\Big)^{|D|}  \\
        &\le\frac{1}{2^{r_0} \Big(C^2\,d^4\,\Big(\frac{m}{n}\Big)^2\,n\Big)^{r_0}} \sum_{r \le r_0}O\bigg(r_0^{r_0-r}\,n^{3r}\,  \Big(C\frac{d}{n}\Big)^{2r}\, \Big(m\,\frac{d}{n}\Big)^{2r}\bigg)  \\
        &\le\frac{1}{2^{r_0} \Big(C^2\,d^4\,\Big(\frac{m}{n}\Big)^2\,n\Big)^{r_0}}\,\, O\Big(n^{3r_0} \Big(C\frac{d}{n}\Big)^{2r_0}\, \Big(m\,\frac{d}{n}\Big)^{2r_0} \Big)  \\
        &\le O\big(2^{-r_0}\big) .
    \end{align*}
    In the second line, the factor of $r_0^{r_0-r}n^{r}$ is a crude upper bound on the number of ways to select $i_1,\dots,i_{r_0}$
    in such a way that there are $r$ distinct indices. 
    From the second to the third line, we have used that $|D| \le 2(\log\log n)^2$ and $m \ge n^{1/2}$. 
    
    Recall that we have assumed $m$ satisfies $m/n \le d^{-2C_\excep}$, where $C_\excep$ is a sufficiently large constant. In particular, we can and will assume $C_\excep \geq 5$. Then,
    \begin{equation}\label{eq:Cw final inductive tail}
    \begin{split}
    \forall\,n^{1/2}\leq m\leq nd^{-2C_\excep},\quad
    \PP{\frac{|\bfC_{T,w}|}{n} >  \Big(\frac{m}{n}\Big)^{3/2},\, \widetilde{\calE}_m} 
    &\le \PP{\frac{|\bfC_{T,w}|}{n} >  2\,C^2\,d^4\,\Big(\frac{m}{n}\Big)^2,\, \widetilde{\calE}_m}\\
    &\le (\log n)^{-\omega(1)}.
    \end{split}
    \end{equation}
    Between \eqref{eq:Cw final inductive tail} and \eqref{eq:small m - Cw tail}, this estimate now holds for the full range of $m \le nd^{-2C_\excep}$. We are ready to select parameters and conclude. Define the sequence $(m_w)_{w \ge 0}$ by:
    \[
        m_{w} = \exp\Big(-\Big(\frac{3}{2}\Big)^{w+\tau}\Big)\,, \quad \text{ where } \quad 
        \tau = \log_{3/2}\Big( \log_{e} \Big(d^{2C_\excep}\Big)\Big)\,,
    \]
    so that
    \[
        m_0 = \frac{1}{d^{2C_\excep}}\quad \text{ and } \quad m_{w+1}= (m_{w})^{\frac{3}{2}}.
    \]
    Note that $m_w < 1/n$ if $w > 5.7\,\log\log n$. On the event $\{\bfC_{T,w_0} = \emptyset\}$, conditionally almost surely $\bfC_{T,w} = \emptyset$ for all $w > w_0$ by construction. Thus, for any $w \ge 0$, it follows by \eqref{eq:small m - Cw tail}, \eqref{eq:Cw final inductive tail} (applied with $m:=m_{w-1}\,n$) and \eqref{eq:C0 estimate},
    \begin{align*}
        \PP{\forall w \ge 0\,:\,\frac{|\bfC_{T,w}|}{n} \le m_w} &= \PP{\forall\, w \in [6\log\log n]\,:\,\frac{|\bfC_{T,w}|}{n} \le m_w} \\
        &\ge 1 - \PP{|\bfC_{T,0}| > m_0} - \sum_{1 \le w \le 6 \log \log n}   \PP{\frac{|\bfC_{T,w}|}{n} > m_w,\, \frac{|\bfC_{T,w-1}|}{n} \le m_{w-1} } \\ 
        &\ge 1 - n^{-.9} - \sum_{1 \le w \le 6 \log \log n} \PP{\frac{|\bfC_{T,w}|}{n} > (m_{w-1})^{\frac{3}{2}},\, \frac{|\bfC_{T,w-1}|}{n} \le m_{w-1} } \\ 
        &\ge 1 - n^{-.9} - 6 \log \log n \cdot \log(n)^{-\omega(1)} \\
        &\ge 1 - \log(n)^{-\omega(1)}\,.
    \end{align*}
    This completes the proof of \cref{prop:category tails}. As already proven, \cref{lemma:exceptional row bound} immediately follows as well. 
\end{proof}

\section{Upper bound: analysis of the algorithm}

With \cref{lemma:exceptional row bound} established, this section is dedicated to completing \cref{thm:ub}. For every $t\in[T]$, and every integer $k\geq 0$
define the $\calF_t$--measurable random set
$$
\calM(t,k):=
\big\{
i\leq n:\;|\langle \row_i(A),\sigma\rangle_{[s]}|
\geq (C_\alg+1)\log\log n
+3k
\mbox{ for some }s\in[t]\big\}.
$$
Observe that for every $k$,
the sequence $(\calM(t,k))_{t \in [T]}$ is nested,
i.e
$$\calM(1,k)\subset \calM(2,k)\subset\dots\subset \calM(T,k).$$
Similarly, for every $t\in[T]$,
$$\calM(t,0)\supset \calM(t,1)\supset \calM(t,2)\supset\dots$$
Our goal is to show that $\calM(T,k)$ is empty with high probability when $k$ is chosen as a large multiple of $\log\log n$. Towards inductively estimating the decay of $|\calM(T,k)|$ as $k$ increases, we begin with the base case. 

\begin{lemma}\label{lem:stepzero}
The event
$$
\Event_{\text{\tiny\ref{lem:discub}}}(0):=
\Big\{
|\calM(T,0)|\leq \frac{n}{\log^2 n}
\Big\}
$$
has probability $1-O((\log n)^{-1})$.
\end{lemma}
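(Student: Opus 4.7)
My plan is to reduce the bound to a conditional analysis of the partial-sum process $S_t(i) := \langle \row_i(A), \sigma \rangle_{[t]}$ after its first hitting time $\tau^*_i := \min\{ t : |S_t(i)| \ge C_\alg \log\log n\}$ of the exceptional threshold. Observe first that $\calM(T,0)\subseteq\bfE_T$, since the threshold $(C_\alg+1)\log\log n$ defining $\calM(T,0)$ strictly exceeds the threshold $C_\alg\log\log n$ defining $\bfE_T$; consequently each $i \in \calM(T,0)$ has $\tau^*_i\le T$, and $|S_t(i)|$ must grow by at least $\log\log n$ after time $\tau^*_i$.

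The crux is a supermartingale bound on this growth. For $t > \tau^*_i$ we have $i\in \bfE_{t-1}$; conditional on $\calF_{t-1}$, at each step $t$ with $a_{i,t}=1$ the algorithm is in one of two mutually exclusive cases: either (X) $\supp(\col_t(A))\cap \bfE_{t-1}=\{i\}$, whence $|S_t(i)|=|S_{t-1}(i)|-1$ (for $|S_{t-1}(i)|\ge 1$), or (Y) the intersection contains some row of $\bfE_{t-1}$ other than $i$, whence $\sigma_t=\widetilde\sigma_t$ is uniform and $|S_t(i)|$ changes by a symmetric $\pm 1$. A direct column-counting argument, analogous to those in \cref{prop:category tails}, shows that conditional on $|\bfE_{t-1}|\le B := n d^{-C_\excep}$, the case-Y probability given $a_{i,t}=1$ and $i\in\bfE_{t-1}$ is at most $p_Y := (1+o(1))d^{1-C_\excep}$. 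A short calculation verifying $(p_Y/2)r^2 - r + (1 - p_Y/2) \le 0$ then shows that for $r := 2/p_Y - 1$, the process $r^{|S_{t\wedge\tau^\circ}(i)|}$ is a supermartingale in $t>\tau^*_i$ (for $|S_t(i)|\ge 1$), with $\tau^\circ := \min\{t: |\bfE_t|>B\}$. Doob's maximal inequality then gives conditionally on $\calF_{\tau^*_i}$:
\[
\Prob\Big[\sup_{\tau^*_i<t\le T\wedge\tau^\circ}|S_t(i)|\ge (C_\alg+1)\log\log n \,\Big|\, \calF_{\tau^*_i}\Big] \le r\cdot r^{-\log\log n}.
\]
The boundary behavior at $|S_t(i)|=0$ is addressed by a standard hitting-time argument, contributing only a negligible $r^{-(C_\alg+1)\log\log n}$ term.

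Combining the above with row exchangeability (which gives $\Prob[i \in \bfE_T] = \Exp[|\bfE_T|]/n \le (1+o(1))d^{-C_\excep}$) and \cref{lemma:exceptional row bound} (which gives $\Prob[\tau^\circ \le T] \le (\log n)^{-\omega(1)}$), summing the per-row bound over $i\in[n]$ and applying Markov's inequality yields $\Prob[|\calM(T,0)|>n/\log^2 n] = O(1/\log n)$. The numerical check uses that, uniformly over $d \ge 2$ with $C_\excep = 5$, we have $r \ge 2\cdot 2^4 - 1 = 31$, so $\log r > 3$ and hence $d^{-C_\excep}\cdot r\cdot (\log n)^{2-\log r} = O(1/\log n)$. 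The main technical obstacle is the careful bookkeeping with the stopping times $\tau^\circ,\tau^*_i$ and the $|S_t(i)| = 0$ boundary; both are otherwise standard.
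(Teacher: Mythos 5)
Your proposal is correct in outline, but it takes a genuinely different route from the paper. The paper's proof is a bare union bound: on the event $\{i\in\calM(T,0)\}$ there must be $m=\lfloor\log\log n\rfloor$ times at which $|\la\row_i(A),\sigma\ra_{[t]}|$ strictly increases while already exceeding $C_\alg\log\log n$, and by the structure of Algorithm~\ref{alg} each such time forces $\col_{j}(A)$ to contain $i$ \emph{and} meet $\bfE_{j-1}\setminus\{i\}$, an event of conditional probability $O(d^{2-C_\excep}/n)$ once $|\bfE_{j-1}|\le nd^{-C_\excep}$ (Lemma~\ref{lemma:exceptional row bound}); summing over the $\binom{T}{m}$ placements gives a per-row bound $(O(d^{2-C_\excep}/m))^m$, and Markov over rows finishes. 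You instead exploit the same dichotomy (case X forces a deterministic down-step, case Y has conditional probability $p_Y\le(1+o(1))d^{1-C_\excep}$) to run a gambler's-ruin/exponential supermartingale for the post-threshold excursion, getting $r^{1-\log\log n}$ with $r=2/p_Y-1\ge 31$, and you gain the extra prefactor $\Prob[i\in\bfE_T]\le(1+o(1))d^{-C_\excep}$ (which indeed follows from $\E|\bfE_T|\le n d^{-C_\excep}+n(\log n)^{-\omega(1)}$; exchangeability is not even needed since only $\sum_i\Prob[i\in\bfE_T]=\E|\bfE_T|$ enters). Your numerics check out, and the overall failure probability is $O((\log n)^{-1.4})+(\log n)^{-\omega(1)}$, which suffices. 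Two points need care in a full write-up: (i) the filtration --- the paper's $\calF_{t-1}=\sigma(\xi;X_1,\dots,X_{t-1})$ contains the \emph{entire} seed $\widetilde\sigma$, so conditioned on it $\widetilde\sigma_t$ is not uniform; you must work with $\sigma(\widetilde\sigma_1,\dots,\widetilde\sigma_t;X_1,\dots,X_t)$, with respect to which $\sigma_t$, $\bfE_t$ and your stopping times are still adapted; and (ii) the boundary at $|S_t(i)|=0$, where the one-step supermartingale inequality genuinely fails, so one should stop at the first return to $0$ and bound each subsequent excursion's chance of reaching $(C_\alg+1)\log\log n$ by $r^{1-(C_\alg+1)\log\log n}$, multiplying by the number of excursions (at most the number of nonzero entries of $\row_i(A)$, of expectation $d$), which keeps this correction negligible as you claim. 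The trade-off: the paper's counting argument is more elementary, needs no stopping-time or boundary bookkeeping, and is reused verbatim as the inductive step in Lemma~\ref{lem:discub}; your drift argument gives a slightly sharper per-row estimate and a cleaner probabilistic picture of why exceptional rows stall, at the cost of these technicalities.
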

\begin{proof}
Fix $i \in [n]$ and define the parameter 
\[  
    m = \lfloor \log\log n\rfloor.
\]
Within the event $\{i\in \calM(T,0)\}$, almost surely
there are at least $m$ indices
$1\leq j_1<\dots<j_m\leq T$
such that
\begin{equation}\label{eq:rowsumgrowthbase}
|\langle \row_i(A),\sigma\rangle_{[j_\ell]}|
>|\langle \row_i(A),\sigma\rangle_{[j_\ell-1]}|
\geq C_\alg\log\log n,\quad \forall \,\ell\in[m].
\end{equation}
For every $\ell\in[m]$, condition \eqref{eq:rowsumgrowthbase} implies
that, first, $i\in \supp(\col_{j_{\ell}}(A))\cap \bfE_{j_\ell-1}$,
and, second, 
$\sigma_{j_\ell} \neq -\mathrm{sign}\big(\la \row_{i}(A),\, \sigma\ra_{[j_\ell-1]}\big)$.
In view of the construction rules for $\sigma$ in \cref{alg}---specifically, the cases considered in the ``if/else'' clauses---there is some $i_\ell\in \supp(\col_{j_{\ell}}(A))\setminus\{i\}$
with
$$
|\langle \row_{i_\ell}(A),\sigma\rangle_{[j_\ell-1]}|
\geq 
|\langle \row_i(A),\sigma\rangle_{[j_\ell-1]}|\geq
C_\alg\log\log n.
$$
In particular, $i_\ell\in\bfE_{j_\ell-1}\setminus\{i\}$ and thus $\bfE_{j_\ell-1}\setminus\{i\}\neq
\emptyset$. 

Next, define the $\calF_{j_m}$--measurable event
\[
    \Event(j_1,\dots,j_m):=
\bigcap_{\ell\in[m]}\{i\in\supp\,\col_{j_\ell}(A)\}\cap \{
\supp\,\col_{j_\ell}(A)\cap (\bfE_{j_\ell-1}
\setminus\{i\})\neq\emptyset\}\cap \{
|\bfE_{j_\ell-1}| \le n\, d^{-C_\excep}\}.
\]
In this notation, a union bound yields
\begin{equation}\label{eq:union bound MT0}
    \Prob\big(\big\{i\in \calM(T,0)\big\}\cap 
\big\{|\bfE_T| \le n\, d^{-C_\excep}\big\}\big)
\leq
\sum\limits_{j_1<\dots<j_m}
\Prob\big(\Event(j_1,\dots,j_m)\big).
\end{equation}
Towards estimating the probability term on the right, we decompose $\Event(j_1,\dots,j_m)$ into a nested sequence of events,
$$
\Event_{j_1}\supset \Event_{j_1}'\supset
\Event_{j_2}\supset \Event_{j_2}'\supset\dots
\Event_{j_m}\supset \Event_{j_m}'=\Event(j_1,\dots,j_m),
$$
defined inductively by: 
\begin{align*}
\Event_{j_1} &:=\{|\bfE_{j_1-1}| \le n\, d^{-C_\excep}\} \\
\Event_{j_q}'&:=\Event_{j_q}\cap
\big\{i\in\supp\,\col_{j_q}(A)\} \cap \{
\supp\,\col_{j_q}(A)\cap (\bfE_{j_q-1}
\setminus\{i\})\neq\emptyset\big\},\;\;1\leq q\leq m;\\
\Event_{j_{q+1}}&:=\Event_{j_q}'\cap
\big\{|\bfE_{j_{q+1}-1}| \le n\, d^{-C_\excep}\big\},\;\;
1\leq q\leq m-1.
\end{align*}
Note that $\Event_{j_q}$ is
$\calF_{j_q-1}$--measurable,
and $\Event_{j_q}'$ is $\calF_{j_q}$--measurable, for all $1\leq q\leq m$.
Observe further that by independence of the columns of $A$, almost surely
\[
    \Prob\big(\calE_{j_{q}}'\,\big|\,\calF_{j_{q}-1} \cap \calE_{j_q}\big) = O\Big(\frac{d}{n}\cdot d\cdot d^{-C_\excep}\Big).
\]
Repeatedly utilizing the above inequality yields
\[
    \Prob\big(\Event(j_1,\dots,j_m)\big)
\leq \bigg(O\Big(\frac{d}{n}\cdot d\cdot d^{-C_\excep}\Big)
\bigg)^m.
\]
Combining this estimate with \eqref{eq:union bound MT0},
$$
\Prob\big(\big\{i\in \calM(T,0)\big\}\cap 
\big\{|\bfE_T| \le n\, d^{-C_\excep}\big\}\big)
=
\bigg(O\Big(\frac{d^2}{m}\cdot d^{-C_\excep}\Big)
\bigg)^m.
$$
Applying Markov's inequality,
$$
\Prob\Big(\Big\{
|\calM(T,0)|\geq \frac{n}{\log^2 n}
\Big\}\cap 
\big\{|\bfE_T| \le n\, d^{-C_\excep}\big\}\Big)
\leq \log^2 n\,\bigg(O\Big(\frac{d^2}{m}\cdot d^{-C_\excep}\Big)
\bigg)^m.
$$
Since $C_\excep\geq 2$, our definition of $m$
and Lemma~\ref{lemma:exceptional row bound}
yield the result.
\end{proof}

The next lemma provides the inductive step for bounding $|\calM(T,k)|$. The proof is essentially identical to the base case. 

\begin{lemma}\label{lem:discub}
For every $k\geq 1$,
the event
$$
\Event_{\text{\tiny\ref{lem:discub}}}(k):=
\big\{
|\calM(T,k)|
\le n\, (\log n)^{-2\cdot 2^k}
\big\}
$$
has probability at least $\Prob(\Event_{\text{\tiny\ref{lem:discub}}}(0))
-k\log^{-1}n$.
\end{lemma}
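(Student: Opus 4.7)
The plan is to mimic the proof of Lemma~\ref{lem:stepzero} by induction on $k$, substituting the inductive bound $|\calM(T, k-1)| \le n(\log n)^{-2 \cdot 2^{k-1}}$ for the role that $|\bfE_\cdot| \le n d^{-C_\excep}$ (from Lemma~\ref{lemma:exceptional row bound}) played in the base case. Fix any $i \in [n]$ and write $W_t := \la \row_i(A), \sigma \ra_{[t]}$ and $M_{k-1} := (C_\alg + 1)\log\log n + 3(k-1)$. The goal is to bound the conditional probability of $i \in \calM(T, k)$ given $\Event_{\text{\tiny\ref{lem:discub}}}(k-1)$ well enough that summing over $i$ and applying Markov's inequality at the target $n(\log n)^{-2 \cdot 2^k}$ contributes at most $1/\log n$ to the failure probability.

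First I would argue that on $\{i \in \calM(T, k)\}$, since the walk $W_t$ must cross from magnitude $\le M_{k-1}$ up to magnitude $\ge M_{k-1}+3$, there exist almost surely at least $m := 3$ distinct times $j_1 < j_2 < j_3$ satisfying
\[
    |W_{j_\ell}| > |W_{j_\ell - 1}| \ge M_{k-1}, \qquad \ell = 1, 2, 3.
\]
Exactly as in the base case, each such increase requires the algorithm to randomize on column $j_\ell$, and hence forces $\supp(\col_{j_\ell}(A))$ to contain $i$ together with at least one other element of $\bfE_{j_\ell - 1}$ (otherwise the algorithm would set $\sigma_{j_\ell}$ so as to strictly reduce $|W_{j_\ell}|$).

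Next I would carry over the nested-conditioning scheme from the base case verbatim, but with the smallness condition on $|\bfE_\cdot|$ replaced by the inductive condition on $|\calM(\cdot, k-1)|$ coming from $\Event_{\text{\tiny\ref{lem:discub}}}(k-1)$. Defining $\Event_{j_\ell} := \Event_{\text{\tiny\ref{lem:discub}}}(k-1)$ and the associated $\Event_{j_\ell}'$ analogously to the base-case proof, the independence of $\col_{j_\ell}(A)$ from $\calF_{j_\ell - 1}$ yields the per-step conditional estimate $\Pr(\Event_{j_\ell}' \mid \calF_{j_\ell - 1} \cap \Event_{j_\ell}) \le O\bigl(\tfrac{d}{n}\cdot d\cdot (\log n)^{-2 \cdot 2^{k-1}}\bigr)$. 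Chaining the three conditional estimates, union-bounding over ordered triples $(j_1, j_2, j_3) \in [T]^3$, summing over $i$, and applying Markov's inequality at the threshold $n(\log n)^{-2 \cdot 2^k}$ then gives a bound of the form
\[
    (\log n)^{2\cdot 2^k} \cdot O\bigl(d^6 (\log n)^{-6 \cdot 2^{k-1}}\bigr) = O\bigl(d^6 (\log n)^{-2^k}\bigr),
\]
which is $o(1/\log n)$ for every $k \ge 1$ since $d \le (\log\log n)^2/\log\log\log n$. Combined with the inductive hypothesis, this closes the induction and yields $\Pr(\Event_{\text{\tiny\ref{lem:discub}}}(k)) \ge \Pr(\Event_{\text{\tiny\ref{lem:stepzero}}}(0)) - k/\log n$.

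The main obstacle is justifying the per-step probability replacement: the algorithm's randomization rule only guarantees that the other exceptional row at time $j_\ell$ lies in $\bfE_{j_\ell - 1}$, whereas the inductive hypothesis directly controls the a priori much smaller set $\calM(j_\ell - 1, k-1)$, and the difference $\bfE_\cdot \setminus \calM(\cdot, k-1)$ need not be negligible. I would resolve this by splitting each bad event according to whether the other exceptional row belongs to $\calM(j_\ell - 1, k-1)$ or to $\bfE_{j_\ell - 1} \setminus \calM(j_\ell - 1, k-1)$; the first case is handled by the inductive bound as above, and the second case is handled via the fine category decomposition $\bfE = \bigcup_w \bfC_{T, w}$ from Proposition~\ref{prop:category tails}, whose geometric decay mirrors (and feeds into) the $2^k$ doubling at hand.
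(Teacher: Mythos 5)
Your outline reproduces the paper's scheme (three growth times $j_1<j_2<j_3$, nested conditioning along the filtration, a union bound over triples, Markov's inequality at the threshold $n(\log n)^{-2\cdot 2^k}$), and your final arithmetic matches the paper's. But there is a genuine gap at exactly the point you flag as ``the main obstacle,'' and the repair you sketch does not close it. The paper's proof never passes through $\bfE_{j_\ell-1}$: it asserts that \eqref{eq:rowsumgrowth} forces $\supp(\col_{j_\ell}(A))$ to meet $\calM(j_\ell-1,k-1)\setminus\{i\}$, i.e.\ that the row blocking the balancing of $i$ is itself already at height $(C_\alg+1)\log\log n+3(k-1)$; it is membership in this $k$--dependent set, together with the $\calF_{j_\ell-1}$--measurable bound $|\calM(j_\ell-1,k-1)|\le |\calM(T,k-1)|\le n(\log n)^{-2\cdot 2^{k-1}}$ valid on $\Event_{\text{\tiny\ref{lem:discub}}}(k-1)$, that legitimizes the per-step factor $(\log n)^{-2\cdot 2^{k-1}}$ and produces the doubling. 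Your deduction from Algorithm~\ref{alg} yields only membership in $\bfE_{j_\ell-1}$, and your two-case split does not recover the factor: in the second case (blocking row in $\bfE_{j_\ell-1}\setminus\calM(j_\ell-1,k-1)$) the only size information available is $|\bfE_{j_\ell-1}|\le n\,d^{-C_\excep}$, which is independent of $k$, so that case contributes order $d^{6-3C_\excep}$ to $\Prob(i\in\calM(T,k))$; after Markov at the threshold $n(\log n)^{-2\cdot 2^k}$ this is vacuous once $(\log n)^{2\cdot 2^k}$ exceeds $d^{3C_\excep-6}$, i.e.\ for all but bounded $k$, whereas the lemma is needed up to $k$ of order $\log\log n$. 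Appealing to Proposition~\ref{prop:category tails} cannot rescue this: the category index $w$ of a row records the collision depth that created its exceptionality, not its current running magnitude, so nothing ties the blocking row's category to the level $k$ at hand, and the decomposition $\bigcup_w \bfC_{T,w}$ is in the end only used to bound $|\bfE_T|$ by the same $k$--independent quantity $n\,d^{-C_\excep}$. As written, your induction therefore does not close.

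Two further remarks. Your conditioning events $\Event_{j_\ell}:=\Event_{\text{\tiny\ref{lem:discub}}}(k-1)$ are not $\calF_{j_\ell-1}$--measurable (they involve the full horizon $T$); the correct device, as in the base case, is to condition on $\{|\calM(j_\ell-1,k-1)|\le n(\log n)^{-2\cdot 2^{k-1}}\}$, which is $\calF_{j_\ell-1}$--measurable and is implied by $\Event_{\text{\tiny\ref{lem:discub}}}(k-1)$ because $t\mapsto\calM(t,k-1)$ is nested; this is a minor fix. The substantive issue is the one above: to argue along your lines you must establish that a row already at level $(C_\alg+1)\log\log n+3(k-1)$ can only be pushed higher by a column that also meets $\calM(j_\ell-1,k-1)\setminus\{i\}$ --- which is precisely the claim the paper states outright --- or else supply a genuinely different mechanism; a collision with a merely exceptional row, which is all that membership in $\bfE_{j_\ell-1}$ encodes, is not rare enough (density $d^{-C_\excep}$, uniformly in $k$) to sustain the doubly exponential recursion.
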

\begin{proof}
The proof closely mirrors that of Lemma~\ref{lem:stepzero}, so some details are omitted.
Assume by way of induction that the event $\Event_{\text{\tiny\ref{lem:discub}}}(k-1)$
has probability at least
$\Prob(\Event_{\text{\tiny\ref{lem:discub}}}(0))
-(k-1)(\log n)^{-1}$
(the base of the induction is trivial).
Fix $i\in [n]$. Within the event $\{i\in \calM(T,k)\}$, almost surely there are at least three indices $1\leq j_1<j_2<j_3\leq T$
such that
\begin{equation}\label{eq:rowsumgrowth}
|\langle \row_i(A),\sigma\rangle_{[j_\ell]}|
>|\langle \row_i(A),\sigma\rangle_{[j_\ell-1]}|
\geq (C_\alg+1)\log\log n+3(k-1),\quad \ell=1,2,3.
\end{equation}
For every $\ell\in[3]$, condition \eqref{eq:rowsumgrowth} implies
that, first, $i\in \supp(\col_{j_{\ell}}(A))\cap \bfE_{j_\ell-1}$,
and, second, 
$\sigma_{j_\ell} \neq -\mathrm{sign}\big(\la \row_{i}(A),\, \sigma\ra_{[j_\ell-1]}\big)$.
In view of the construction rules for $\sigma$ in \cref{alg},
there exists some $i_\ell\in \supp(\col_{j_{\ell}}(A))\setminus\{i\}$
with
$$
|\langle \row_{i_\ell}(A),\sigma\rangle_{[j_\ell-1]}|
\geq 
|\langle \row_i(A),\sigma\rangle_{[j_\ell-1]}|\geq
(C_\alg+1)\log\log n+3(k-1).
$$
In particular, $i_\ell\in\calM(j_\ell-1,k-1)\setminus\{i\}$ and $\calM(j_\ell-1,k-1)\setminus\{i\}\neq
\emptyset$. 

Define the $\calF_{j_3}$--measurable event
\[
\Event(j_1,j_2,j_3):= \bigcap_{\ell \in [3]}
\{i\in\supp\,\col_{j_\ell}(A)\}\cap \{
\supp\,\col_{j_\ell}(A)\cap (\calM(j_\ell-1,k-1)
\setminus\{i\})\neq\emptyset\}.
\]
In this notation, a union bound over the column indices $j_1,j_2,j_3$ yields
\[
\Prob\big(\big\{i\in \calM(T,k)\big\}\cap 
\Event_{\text{\tiny\ref{lem:discub}}}(k-1)\big)
\leq
\sum\limits_{j_1<j_2<j_3}
\Prob(\Event_{\text{\tiny\ref{lem:discub}}}(k-1)\,
\cap\,\Event(j_1,j_2,j_3)).
\]
Recalling the induction hypothesis and conditioning sequentially on $\calF_{j_1-1}$, $\calF_{j_2-1}$, and $\calF_{j_3-1}$, a computation similar to that in the proof of \cref{lem:stepzero} yields the estimate: 
\[
    \Prob\Big(\Event_{\text{\tiny\ref{lem:discub}}}(k-1)\, \cap\,\Event(j_1,j_2,j_3)\Big) = \bigg(O\Big(\frac{d}{n}\cdot d\cdot (\log n)^{-2\cdot 2^{k-1}}\Big)\bigg)^3
\]
Next, taking a union bound over all choices of $j_1<j_2<j_3$, 
$$
\Prob\big(\big\{i\in \calM(T,k)\big\}\cap 
\Event_{\text{\tiny\ref{lem:discub}}}(k-1)\big)
=
\Big(O\big(d^2\cdot (\log n)^{-2\cdot 2^{k-1}}\big)
\Big)^3.
$$
Finally, applying Markov's inequality, we obtain
$$
\Prob\big(\Event_{\text{\tiny\ref{lem:discub}}}(k)^c\cap
\Event_{\text{\tiny\ref{lem:discub}}}(k-1)\big)
\leq \big((\log n)^{2\cdot 2^{k}}\big)
\Big(O\big(d^2\cdot (\log n)^{-2\cdot 2^{k-1}}\big)
\Big)^3
=O\big(d^6\cdot (\log n)^{-2\cdot 2^{k-1}}\big),
$$
and thus
$$
\Prob\big(\Event_{\text{\tiny\ref{lem:discub}}}(k))
\geq \Prob(\Event_{\text{\tiny\ref{lem:discub}}}(k-1))
-\Prob\big(\Event_{\text{\tiny\ref{lem:discub}}}(k)^c\cap
\Event_{\text{\tiny\ref{lem:discub}}}(k-1)\big)
\geq \Prob(\Event_{\text{\tiny\ref{lem:discub}}}(k-1))
-(\log n)^{-1}.
$$
The induction is complete and the result follows. 
\end{proof}

As an immediate corollary of \cref{lem:discub}, we obtain
the desired performance guarantee for Algorithm~\ref{alg} that constitutes the upper bound in Theorem~\ref{thm:ub}.
\begin{corollary}
With probability at least $1
-\frac{O(1)}{\log n}-
\frac{2\log\log n}{\log n}$,
Algorithm~\ref{alg} returns a vector of signs $\sigma$
satisfying 
$$
\max\limits_{t\leq n}\Big\|\sum_{s\leq t}
\sigma_s \col_s(A)\Big\|_\infty\leq (C_\alg+7)\log\log n.
$$
\end{corollary}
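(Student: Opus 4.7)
The plan is to derive the corollary as a direct consequence of Lemma~\ref{lem:discub} applied at a specific value of the parameter $k$, together with the definition of the set $\calM(t,k)$. The key observation is that the upper bound $n\,(\log n)^{-2\cdot 2^k}$ from Lemma~\ref{lem:discub} decays doubly exponentially in $k$, so it is enough to take $k$ of order $\log\log n$ in order to force $\calM(T,k)$ to be empty.

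Concretely, I would set $k^{\ast} := \lceil 2\log\log n\rceil$. Then $2^{k^{\ast}}$ grows only polylogarithmically in $n$, whereas the exponent $2\cdot 2^{k^{\ast}}\cdot \log\log n$ exceeds $\log n$ by a large margin for $n$ sufficiently large. Consequently $n\,(\log n)^{-2\cdot 2^{k^{\ast}}} < 1$, and since $|\calM(T,k^{\ast})|$ is a nonnegative integer, the event $\Event_{\text{\tiny\ref{lem:discub}}}(k^{\ast})$ is contained in the event $\{\calM(T,k^{\ast}) = \emptyset\}$.

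Next, unfolding the definition of $\calM(T,k^{\ast})$: on this event, for every $i\in[n]$ and every $t\leq T$,
$$|\langle \row_i(A),\sigma\rangle_{[t]}| \;<\; (C_\alg+1)\log\log n + 3k^{\ast} \;\leq\; (C_\alg+7)\log\log n,$$
which, since this inner product is exactly the $i$-th coordinate of $\sum_{s\leq t}\sigma_s\col_s(A)$, is precisely the desired prefix discrepancy bound.

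For the probability estimate, I would combine Lemma~\ref{lem:stepzero}, which gives $\Prob(\Event_{\text{\tiny\ref{lem:discub}}}(0)) \geq 1 - O(1/\log n)$, with Lemma~\ref{lem:discub} applied at $k = k^{\ast}$ to obtain
$$\Prob(\Event_{\text{\tiny\ref{lem:discub}}}(k^{\ast})) \geq \Prob(\Event_{\text{\tiny\ref{lem:discub}}}(0)) - \frac{k^{\ast}}{\log n} \geq 1 - \frac{O(1)}{\log n} - \frac{2\log\log n}{\log n},$$
matching the claim. There is no genuine new difficulty at this stage: all the technical content lives in Lemmas~\ref{lem:stepzero} and~\ref{lem:discub}, and the corollary is merely the bookkeeping of choosing the iteration depth so that the double exponential decay of $|\calM(T,k)|$ overwhelms the factor of $n$.
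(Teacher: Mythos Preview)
Your proposal is correct and follows essentially the same argument as the paper, which likewise applies Lemma~\ref{lem:discub} at $k=\lfloor 2\log\log n\rfloor$ to force $\calM(T,k)=\emptyset$. The only cosmetic discrepancy is your use of the ceiling $k^\ast=\lceil 2\log\log n\rceil$, which makes the bound $(C_\alg+1)\log\log n+3k^\ast$ overshoot $(C_\alg+7)\log\log n$ by at most $3$; taking the floor (as the paper does) removes this harmless off-by-one.
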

In particular, it suffices to take $C_\ub = C_\alg + 7 = 35$. 
\begin{proof}
Apply \cref{lem:discub} with $k:=\lfloor 2\log\log n\rfloor$ and note:
\[
    \Event_{\text{\tiny\ref{lem:discub}}}(k) = \{|\calM(T,k)|
\le n\, (\log n)^{-2\cdot 2^k}<1\} = \{\calM(T,k) = \emptyset\}.
\]

\end{proof}

\bibliographystyle{plain}
\bibliography{references}
\end{document}